\newtheorem{thm}{Theorem}[section]
\newtheorem{lem}[thm]{Lemma}
\newtheorem{prop}[thm]{Proposition}
\newtheorem{cor}[thm]{Corollary}
\theoremstyle{definition}
\newtheorem{claim}{Claim}[section]
\newtheorem{conj}{Conjecture}[section]
\theoremstyle{remark}
\newtheorem{rem}{Remark}
\begin{document}

\title{Algebraic sets defined by the commutator matrix}
\author[1]{Zhibek Kadyrsizova}
\author[2]{Madi Yerlanov}
\affil[1]{{\small Department of  Mathematics, Nazarbayev University, \newline
email: zhibek.kadyrsizova@nu.edu.kz}}
\affil[2]{{\small Department of Mathematics, Simon Fraser University, \newline
email: myerlano@sfu.ca}}

\date{}
\maketitle
\begin{abstract}
In this paper we study algebraic sets of pairs of matrices defined by the vanishing of either the diagonal of their commutator matrix or its anti-diagonal. We find a system of parameters for the coordinate rings of these two sets and their intersection and show that they are complete intersections. Moreover, we prove that these algebraic sets are $F$-pure over a field of positive prime characteristic and the algebraic set of pairs of matrices with the zero diagonal commutator is $F$-regular. \\

\textit{Keywords: commutator matrix, system of parameters, complete intersection, $F$-pure, $F$-regular}    
\end{abstract}

\section{Introduction}

Let $X=\left(x_{ij}\right)_{1\leq i, \, j \leq n}$  and $Y=\left(y_{ij}\right)_{1\leq i, \, j \leq n}$ be square matrices of size $n\geq 3$ with indeterminate entries over a field $K$ and $R=K[X, Y] $ be the polynomial ring over $K$ in $\{x_{ij},\, y_{ij}\, |\, 1\leq i, \, j \leq n\}$. 
Let $\mathfrak{m}$ be the homogeneous maximal ideal of $R$. 
Let $C=XY-YX=\left(c_{ij}\right)$ be the commutator matrix and $\mathcal{I}$ be the ideal of $R$ generated by the diagonal and anti-diagonal entries of $C$.  
Let $I$ be the ideal of $R$ generated by the diagonal entries of $C$ and $J$ be the ideal of $R$ generated by the anti-diagonal entries of $C$, that is, $\mathcal{I}=I+J$.  
Let $X_0$ and $Y_0$ be the square matrices of size $(n-2)$ obtained by removing the first and last rows and columns of $X$ and $Y$, respectively. 
We denote by $\mathcal{I}_0$ the ideal of $R_0=K[X_0,Y_0]$ generated by the diagonal and anti-diagonal entries of $C_0=X_0Y_0-Y_0X_0=\left(c_{ij}^{0}\right)$. 

\begin{rem}
Observe that if $n=2$, then $\mathcal{I}$ is the ideal generated by all the entries of the commutator matrix $C$. Therefore, the vanishing set defines the variety of commuting matrices for $n=2$ and also a determinantal ideal,\cite{det}, \cite{bruns}, \cite{diag}. Since the properties of rings that we study in this paper have been extensively studied in this case, we focus on matrices of size $n\geq 3$.
\end{rem}

\begin{rem}
Since the trace of the commutator matrix $C$ is equal to 0, from the main diagonal of $C$ it is sufficient to use any $n-1$ entries. Therefore, the ideal $\mathcal{I}$ can be generated by either $2n-2$, if $n$ is odd, and $2n-1$, if $n$ is even, elements. In fact, as the results of the paper show, this defines the minimal number of generators for the ideal. 
\end{rem}

Below is a summary of the results that we prove in the paper.
\begin{thm} 
\begin{itemize}
    \item[1.] The rings $R/\mathcal{I}$, $R/I$ and $R/J$ are complete intersections, that is, the ideals $\mathcal{I}$, $I$ and $J$ are generated by a regular sequence.  
    \item [2.] $R/\mathcal{I}$ and $R/J$ are $F$-pure rings when the characteristic of the field $K$ is positive prime. In particular, in this case $\mathcal{I}$ and $J$ are radical ideals.  
    \item[3.] $R/I$ is $F$-regular when the characteristic of the field $K$ is positive prime. In particular, in this case $R/I$ is an integral domain. 
\end{itemize}
\end{thm}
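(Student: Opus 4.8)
\emph{Part 1.} Since $R$ is a polynomial ring, hence Cohen--Macaulay, an ideal generated by $g$ elements is generated by a regular sequence as soon as its height equals $g$; as Krull's height theorem gives $\operatorname{ht}\le g$ for free, the whole of Part~1 reduces to the lower bounds $\operatorname{ht}\mathcal{I}\ge 2n-2$ (resp.\ $2n-1$) for $n$ odd (resp.\ even), $\operatorname{ht} I\ge n-1$ and $\operatorname{ht} J\ge n$, where for $I$ one uses the $n-1$ generators $c_{11},\dots,c_{n-1,n-1}$ (the last diagonal entry being minus their sum) and for $J$ the $n$ anti-diagonal entries. The plan is to prove these bounds by a Gr\"obner degeneration: fix one monomial order $<$ on $R$ for which the chosen generators of each ideal have \emph{pairwise coprime squarefree} leading monomials. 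Concretely, in each $c_{ii}=\sum_{k\neq i}(x_{ik}y_{ki}-y_{ik}x_{ki})$ and each anti-diagonal entry $c_{i,n+1-i}$ one selects a quadratic term so that, across all generators involved, no variable is used twice, and then one picks $<$ making these the leading terms. Granting this, $\operatorname{in}_<(\mathcal{I})$ contains as many pairwise coprime monomials as there are generators, so $\operatorname{ht}\mathcal{I}=\operatorname{ht}\operatorname{in}_<(\mathcal{I})$ is at least that number; the same applies to $I$ and $J$. The coordinate rings then have dimension $2n^2$ minus the respective numbers of generators, and the explicit system of parameters promised in the abstract is obtained by completing these regular sequences with a suitable set of linear forms in the entries of $X$ and $Y$.

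\emph{Part 2.} With Part~1 in hand, $\mathcal{I}$, $I$, $J$ are complete intersections, so Fedder's criterion takes the sharp form: $R/\mathcal{I}$ is $F$-pure iff $(f_1\cdots f_g)^{p-1}\notin\mathfrak{m}^{[p]}$, where $f_1,\dots,f_g$ are the chosen generators, and similarly for $J$. Since $u\notin\mathfrak{m}^{[p]}$ as soon as $u$ has a monomial in which every variable occurs with exponent $<p$, it suffices to produce a monomial order in which $\operatorname{in}(f_1\cdots f_g)=\prod_i\operatorname{in}(f_i)$ is squarefree: its $(p-1)$st power then has all exponents $p-1<p$ and survives as the leading term of $(f_1\cdots f_g)^{p-1}$. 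But ``the $\operatorname{in}(f_i)$ are squarefree and pairwise coprime'' is exactly the combinatorial input of Part~1, so a single choice of order does both jobs. Radicality of $\mathcal{I}$ and $J$ then follows because $F$-pure rings are reduced.

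\emph{Part 3.} By Part~1, $R/I$ is a complete intersection, hence Gorenstein and Cohen--Macaulay, and by the Part~2 computation it is $F$-pure. I would upgrade $F$-purity to $F$-regularity in two steps. First, normality: $S_2$ is automatic from Cohen--Macaulayness, and $R_1$ is checked by the Jacobian criterion, estimating the rank of the Jacobian matrix of $c_{11},\dots,c_{n-1,n-1}$ to show that the singular locus of $\operatorname{Spec}(R/I)$ has codimension $\ge 2$; thus $R/I$ is a normal domain. Second, $F$-rationality: here the recursive structure flagged in the introduction ($X_0,Y_0,C_0,\mathcal{I}_0,R_0$ for matrices of size $n-2$) enters. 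Inverting a corner entry of $X$ and performing invertible linear changes of the entries of $X$ and $Y$, one identifies a localization of $R/I$ with a localization of a polynomial extension of the size-$(n-2)$ ring, so that strong $F$-regularity propagates by induction on $n$ from small base cases, the behaviour along the ``corner $=0$'' hypersurface being controlled by the established $F$-purity together with a codimension count; equivalently, one shows directly that a system of parameters of $R/I$ generates a tightly closed ideal. Invoking the theorem that a Gorenstein $F$-rational ring is strongly $F$-regular then completes Part~3.

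\emph{Main obstacle.} Parts~1 and~2 rest on one well-chosen monomial order together with formal input (Cohen--Macaulayness of $R$, Fedder's criterion). The genuine difficulty is Part~3: the Jacobian rank bound needed for normality, and above all the $F$-rationality argument (the tight-closure / inductive-localization step), which does not follow from $F$-purity alone --- an $F$-pure Gorenstein normal ring need not be $F$-regular --- and is precisely where the special geometry of the zero-diagonal-commutator locus must be exploited.
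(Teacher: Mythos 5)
Your plan and the paper's proof are genuinely different in strategy, and the differences matter in different ways for the three parts.

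\textbf{Part 1.} Your Gr\"obner-degeneration idea is sound in outline (pairwise coprime squarefree monomials in $\operatorname{in}_<(\mathcal{I})$ force $\operatorname{ht}\operatorname{in}_<(\mathcal{I})\ge g$, hence $\operatorname{ht}\mathcal{I}\ge g$), but the essential claim --- that one can \emph{choose} a quadratic term from each of $c_{11},\dots,c_{n-1,n-1}$ and from each anti-diagonal entry so that these monomials are supported on disjoint sets of variables, \emph{and} that some monomial order makes them simultaneously the leading terms --- is exactly the content you would have to prove, and you do not. The second half is not automatic from the first: a choice of coprime monomials need not be realizable by any weight order. For example, with $n=3$ the natural choices $c_{13}\mapsto x_{13}y_{33}$, $c_{31}\mapsto x_{31}y_{11}$ (or $c_{13}\mapsto y_{11}x_{13}$, $c_{31}\mapsto y_{33}x_{31}$) force conflicting requirements $y_{33}>y_{11}$ and $y_{11}>y_{33}$ coming from the cross-terms $x_{13}y_{33}$ in $c_{13}$ and $x_{31}y_{11}$ in $c_{31}$, and one has to search for a non-obvious assignment before a consistent one appears. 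For general $n$ the existence of a globally consistent choice is a nontrivial combinatorial statement (a kind of rainbow matching plus acyclicity of the induced tournament of inequalities). The paper avoids this entirely: it writes down an explicit homogeneous system of parameters (Theorems 2.2--2.4) and verifies $\mathfrak{m}$-primariness of $\mathcal{I}+(\text{sop})$ by a two-step induction on $n$, so the height count is concrete rather than asserted.

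\textbf{Part 2.} Here your approach, if Part 1's monomial-order claim were supplied, would actually be \emph{cleaner} than the paper's. Your observation --- that if $\operatorname{in}(f_1\cdots f_g)$ is squarefree then $(f_1\cdots f_g)^{p-1}$ has $(\operatorname{in}(f_1\cdots f_g))^{p-1}$ as a monomial with coefficient $1$, all exponents equal to $p-1<p$, hence $(f_1\cdots f_g)^{p-1}\notin\mathfrak{m}^{[p]}$ --- kills Fedder's criterion in one line and requires no arithmetic. The paper instead deforms by the sop (using that $F$-purity deforms for complete intersections), writes out the product of the surviving generators, and extracts a specific monomial whose coefficient must be shown nonzero mod $p$; for $n$ even this forces a genuine binomial-sum identity (Lemma 3.4), and for $n=4$ in characteristic $2$ the authors resort to Macaulay2. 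So your route buys a much cheaper Part 2 but is entirely contingent on the unproven order.

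\textbf{Part 3.} This is where the proposals diverge most and where yours has a real gap. The paper's proof is short: factor $R/I$ by part of the sop from Theorem 4.4, observe that the resulting ring is $\bigotimes_{i=2}^n K[Z_i]/(\det Z_i)$ with $Z_i$ a generic $2\times 2$ matrix, each factor is a generic determinantal ring (hence $F$-regular), the tensor product is $F$-regular, and $F$-regularity deforms for Gorenstein rings (both facts from Hochster--Huneke). Your plan --- prove normality via a Jacobian rank bound, then prove $F$-rationality by an inductive localization at a corner variable, then invoke ``Gorenstein $F$-rational $\Rightarrow$ $F$-regular'' --- is a reasonable template, but the middle step is only gestured at. $F$-rationality does not ``propagate by induction'' for free: you would have to show either that some sop generates a tightly closed ideal, or that the localization $R/I$ at $x_{11}$ is smooth over a smaller copy of the same ring \emph{and} that the closed locus $x_{11}=0$ contributes nothing to the non-$F$-rational locus, and $F$-purity alone does not control this (you correctly flag that $F$-pure Gorenstein normal rings need not be $F$-regular). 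The paper sidesteps all of this precisely because the explicit sop from Theorem 4.4 makes the deformed ring \emph{recognizable} as a product of $2\times2$ determinantal rings. This is the pay-off of the hard work in Section 2 that your plan would have skipped.

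In short: Part 1 needs the monomial-order existence proof you have only asserted; Part 2 would then be slicker than the paper's; Part 3 needs a concrete identification of a deformation of $R/I$ (or a fully worked tight-closure computation), and the paper's tensor-of-determinantal-rings observation is the key ingredient your sketch is missing.
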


\begin{rem}
Notice that the fact $R/I$ is a complete intersection is first proved by H.W.Young in \cite{young}, here we obtain this fact as a corollary of Theorems \ref{SOPodd}, \ref{SOPeven} and \ref{SOPchar2}. Moreover, Young proved that $R/I$ is reduced when $K$ is a field of characteristic 0.  
\end{rem}

\section{A system of parameters}

In this section we find a homogeneous system of parameters for $R/\mathcal{I}$. We do the proof by induction on $n$ and with the induction step equal to 2. Therefore, we prove the results separately for odd  and even values of $n$. Moreover, when $n$ is even we consider two cases: when the characteristic of $K$ is not equal to 2 and when the characteristic of $K$ is 2. There is an appendix at the end of the paper which the reader may find helpful in understanding the pattern of the system of parameters as $n$ varies.  


\begin{thm} \label{SOPodd}
Let $n=2k+1$ be an odd integer with $k \geq 1$.
Let \[\Lambda=\left\{(i,j) \, | 1 \leq i\leq k, \, 1 \leq j \leq 2k+1 \right\}\cup \]
\[\left\{(k+1,k+1) \right\}\cup\]
\[ \left\{(i,j) \, | k+2 \leq i\leq 2k+1, \, k+2 \leq j \leq 2k+1 \right\}\cup\]
\[ \left\{(i,j) \, | k+3 \leq i\leq 2k+1, \, 1 \leq j \leq k \right\}\] 

Then \[x_{ij}, \, y_{ji} \text{ with } (i,j)\in \Lambda,\]
\[x_{k+1, s}-y_{2k+2-s, k+1} \text{ with } 1 \leq s \leq 2k+1\]
(notice $s=k+1$ is included above in $\Lambda$)
\[x_{s, k+1}-y_{k+1, s} \text{ with } k+2 \leq s \leq 2k+1\]
\[x_{k+2, s}-y_{s, k+2} \text{ with } 1 \leq s \leq k\]
is a homogeneous system of parameters for $R/\mathcal{I}$. 
\end{thm}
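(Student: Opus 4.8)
The plan is to exploit that every element in the proposed list is a linear form, so that cutting $R/\mathcal{I}$ down by them is just a linear substitution. Write $L$ for the ideal of $R$ generated by the listed elements. First I would record that, after noting the single redundancy coming from $s=k+1$ in the family $x_{k+1,s}-y_{2k+2-s,k+1}$ (whose $s=k+1$ member is $x_{k+1,k+1}-y_{k+1,k+1}$, already in $L$ since $(k+1,k+1)\in\Lambda$), the list consists of exactly $2n^{2}-2n+2$ linearly independent linear forms. Since $\mathcal{I}$ is generated by $2n-2$ elements, Krull's height theorem gives $\dim R/\mathcal{I}\ge 2n^{2}-(2n-2)=2n^{2}-2n+2$, so it suffices to prove that $R/(\mathcal{I}+L)$ has finite length: that forces $\dim R/\mathcal{I}=2n^{2}-2n+2$ and makes the list a homogeneous system of parameters.

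Next I would make the substitution explicit. Setting to zero the variables $x_{ij},y_{ji}$ with $(i,j)\in\Lambda$ kills the first $k$ rows of $X$, the bottom-right $k\times k$ block of $X$, the first $k$ columns of $Y$, the bottom-right $k\times k$ block of $Y$, and a few further entries, while the identifications in the three remaining families express every surviving entry of $X$ in terms of an entry of $Y$. A direct count shows $\bar R:=R/L$ is a polynomial ring in exactly $2n-2$ variables, which one checks may be taken to be $y_{i,k+1}$ ($i\ne k+1$), $y_{i,k+2}$ ($1\le i\le k$), and $y_{k+1,j}$ ($k+2\le j\le 2k+1$). I would then write down the images $\bar c_{ij}$ in $\bar R$ of the diagonal and anti-diagonal entries of $C=XY-YX$; because $\bar X$ is very sparse, these are quadratic forms in the $2n-2$ surviving variables, and one sees that the entries of $\bar C$ indexed by the center $(n-2)\times(n-2)$ block reproduce, modulo the ideal of the ``boundary'' variables, the commutator $C_0=X_0Y_0-Y_0X_0$ after the analogous substitution at size $n-2$.

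The core of the argument is then to show that the $2n-2$ forms $\bar c_{ij}$ (diagonal and anti-diagonal entries, one diagonal entry discarded via $\operatorname{tr}C=0$) have only the origin as common zero in $\mathbb{A}^{2n-2}_{K}$. Here I would induct on $k$. The base case $n=3$ is a small explicit computation in four variables $a,b,c,e$ in which the anti-diagonal entries $\bar c_{1n}=-a^{2}$, $\bar c_{n1}=-e^{2}$ force $a=e=0$ and then $\bar c_{11}=-(ae+b^{2})$, $\bar c_{22}=2ae-c^{2}$ force $b=c=0$, valid in every characteristic since the $2ae$ term is irrelevant once $a=0$. For the inductive step, the $2n-6$ commutator entries indexed inside the center block vanish, so by the induction hypothesis applied to $\mathcal{I}_{0}$ (which, by Theorem~\ref{SOPodd} at size $n-2$, has the corresponding list as a system of parameters) all the ``interior'' variables vanish; substituting this back leaves four boundary variables governed by four remaining entries of $\bar C$ — the analogues of $c_{11},c_{nn},c_{1n},c_{n1}$ — which, exactly as in the base case, degenerate to scalar multiples of squares of single boundary variables (plus terms in the already-vanished interior variables), and hence force those boundary variables to zero as well. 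This gives $R/(\mathcal{I}+L)$ Artinian, completing the proof.

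The main obstacle I anticipate is the bookkeeping in the inductive step: one must pin down precisely which surviving variables play the role of the interior variables of size $n-2$ and verify that the center-block entries of $\bar C$ agree with the size-$(n-2)$ commutator entries up to the ideal generated by the four boundary variables, so that the induction hypothesis genuinely applies; and one must exhibit the resulting cascade among the four boundary equations uniformly in $n$ and in all characteristics. Organizing the substitution as a block decomposition of $\bar X$ and $\bar Y$ into border rows and columns versus the center block should keep this tractable, and the small cases collected in the appendix can be used to confirm the pattern before the general computation.
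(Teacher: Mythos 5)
Your proposal follows essentially the same route as the paper: after the linear substitutions you arrive at a polynomial ring in $2n-2$ variables, you compute the images of the diagonal and anti-diagonal entries of the commutator, you observe that the center $(n-2)\times(n-2)$ block essentially reproduces the size-$(n-2)$ data, and you induct on $k$. Your count of the linear forms (including the single redundancy at $s=k+1$), the Krull-height reduction, the base case $n=3$, and the block decomposition all agree with the paper's argument.

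However, the \emph{order} of the two steps in your inductive argument has a gap. You propose to apply the induction hypothesis to the center block first, concluding that the interior variables vanish, and only then to treat the four boundary variables via the corner entries $\bar c_{11},\bar c_{1n},\bar c_{n1},\bar c_{nn}$. But two of the diagonal entries inside the center block, namely $\bar c_{k+1,k+1}$ and $\bar c_{k+2,k+2}$, are \emph{not} equal to the corresponding size-$(n-2)$ entries $\bar c^{0}_{k,k}$ and $\bar c^{0}_{k+1,k+1}$: one carries the correction $2x_{k+1,1}x_{k+1,2k+1}-x_{2k+1,k+1}^{2}$ and the other the correction $x_{k+2,1}^{2}$, both built from boundary variables. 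So from the vanishing of the center-block entries you only learn that all the $\bar c^{0}$'s vanish \emph{except those two}, and that is not enough to invoke the size-$(n-2)$ statement. Meanwhile the four corner entries are purely quadratic in the four boundary variables with no interior variable present: $\bar c_{1,n}=-x_{k+1,2k+1}^{2}$ and $\bar c_{n,1}=-x_{k+1,1}^{2}$ force those two to zero, after which $\bar c_{11}=-x_{k+1,1}x_{k+1,2k+1}-x_{k+2,1}^{2}$ and $\bar c_{nn}=x_{2k+1,k+1}^{2}-x_{k+1,1}x_{k+1,2k+1}$ force the remaining two. The correct order is therefore the reverse of yours: eliminate the boundary variables from the corner equations first, note that the two corrections in the center block then disappear, and \emph{only then} apply the induction hypothesis to conclude the interior variables vanish. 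Your parenthetical claiming the corner entries contain ``terms in the already-vanished interior variables'' is also inaccurate --- those corners involve no interior variables at all, whereas it is the two corrected center-diagonal entries that mix boundary with interior. With the order reversed, your argument becomes the paper's proof.
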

\begin{proof}
We prove the theorem by showing that the generators of $\mathcal{I}$, $n-1$ entries of the  main diagonal and $n-1$ entries of the anti-diagonal of $C$, together with the ring elements from the statement of the theorem form a system of parameters for $R$.  We achieve this by showing that the ideal they generate is $\mathfrak{m}$-primary. It is not difficult to compute that their number is exactly equal to $2n^2$, the dimension of $R$. Moreover, we use induction on $k$.\\

Let $k=1$. We prove the base of the induction by showing that the radical of the ideal generated by $\mathcal{I}$ and the above sequence has Krull dimension $18$, the dimension of $R$ in this case. Observe, that $\mathcal{I}$ is generated by 4 elements and there are 14 elements in the sequence. This will show that the generators of $\mathcal{I}$ and the sequence elements form a system of parameters for $R$. 

Factor out $R$ by the ideal generated by the sequence elements. Then we have the following matrices
\[\overline{X}=\left[\begin{array}{ccc}
0& 0&0 \\
x_{21}& 0&x_{23} \\
x_{31} & x_{32}&0
\end{array}\right] \text{ and } \overline{Y}=\left[\begin{array}{ccc}
0& x_{23}&x_{31} \\
0& 0&x_{32} \\
0& x_{21}&0
\end{array}\right]\]
and the commutator matrix becomes 
\[\overline{XY-YX}=\left[\begin{array}{ccc}
-x_{21}x_{23}-x_{31}^2& \star&-x_{23}^2 \\
\star& 2x_{21}x_{23}-x_{32}^2&\star \\
-x_{21}^2& \star&x_{31}^2+x_{32}^2-x_{21}x_{23}
\end{array}\right]\]
 Therefore, 
 \[\text{Rad }\overline{\mathcal{I}}=\text{Rad}(-x_{23}^2, -x_{21}^2, -x_{21}x_{23}-x_{31}^2, 2x_{21}x_{23}-x_{32}^2, x_{31}^2+x_{32}^2-x_{21}x_{23})=\]
 \[(x_{21}, x_{23}, x_{31}, x_{32})\]
 is the homogeneous maximal ideal in $K[x_{21}, x_{23}, x_{31}, x_{32}]$. Thus we have the result for $n=3$.

Now we prove the induction step. We claim that a similar pattern persists for all positive integer values of $k$. 
We apply the induction hypothesis in the ring $R_0=K[X_0, Y_0]$.  
Factor out $R$ by the ring elements in the set defined in the statement of the theorem. Then we have the following matrices 
\begin{small}\[\overline{X}=
\begin{blockarray}{cccccc}
\begin{block}{[c|ccc|c]c}
  0& 0&\ldots &0&0 \\ 
  \cline{1-6}
  0 &&&&0\\
  \vdots &&&& \vdots \\
     0 &&&& 0 \\
x_{k+1,1} &&& & x_{k+1, 2k+1} \\
  x_{k+2,1} &&\raisebox{-5pt}{{\Large \mbox{{$X_0$}}}}&& 0 \\
   0 &&&& 0 \\
  \vdots &&&& \vdots \\
   0 &&&& 0 \\
  \cline{1-6}
   0&0 \ldots 0&x_{2k+1, k+1}&0 \ldots 0&0\\ 
 \end{block}
\end{blockarray}\]
and 
\[\overline{Y}=
\begin{blockarray}{cccccc}
&&(k+1)\text{-column }\hspace{3mm} &&\\
&&\swarrow \quad \hspace{3mm}&&\\
\begin{block}{[c|ccc|c]c}
 0&0 \ldots 0&x_{k+1, 2k+1}\quad x_{k+2, 1}&0 \ldots 0&0\\  
  \cline{1-5}
    0 &&&&0\\
\vdots &&&& \vdots \\
     0 &&&& 0 \\
0 & &\raisebox{-5pt}{{\Large \mbox{{$Y_0$}}}}&& x_{2k+1, k+1}&\leftarrow (k+1)\text{-row} \\
   0 &&&& 0 \\
  \vdots &&&& \vdots \\
   0 &&&& 0 \\
  \cline{1-5}
   0&0 \ldots 0&x_{k+1, 1} \quad \quad 0&0 \ldots 0&0\\  
   \end{block}
\end{blockarray}
 \]
\end{small}

We start off by writing out the generators of $\mathcal{I}$ in the factor ring. 

\[\overline{c}_{11}=-x_{k+1, 1}x_{k+1, 2k+1}-x_{k+2, 1}^2\]
\[\overline{c}_{1, 2k+1}=-x_{k+1, 2k+1}^2\]
\[\overline{c}_{ 2k+1, 1}=-x_{k+1,1}^2\]
\[\overline{c}_{2k+1, 2k+1}=x_{2k+1, k+1}^2-x_{k+1,1}x_{k+1, 2k+1}\]
For $2 \leq i\leq k$,
\[\overline{c}_{ii}=\sum_{j=1, j\neq i}^{2k+1}(x_{ij}y_{ji}-x_{ji}y_{ij})=-x_{k+1, i}y_{i, k+1}-x_{k+2, i}y_{i, k+2}=\]
\[-x_{k+1, i}x_{k+1, 2k+2-i}-x_{k+2, i}^2=\overline{c}_{i-1,i-1}^0\]
\[\overline{c}_{k+1, k+1}=\sum_{j=2, j\neq k+1}^{2k}(x_{j+1,j}y_{j, k+1}-x_{j,k+1}y_{k+1,j})+\]
\[x_{k+1, 1}y_{1, k+1}-x_{k+1, 2k+1}y_{2k+1,k+1}-x_{2k+1, k+1}y_{k+1, 2k+1}=\]
\[=\overline{c}_{k, k}^0+2x_{k+1, 1}x_{k+1, 2k+1}-x_{2k+1, k+1}^2\]
\[\overline{c}_{k+2, k+2}=\sum_{j=2}^{2k}(x_{ij}y_{ji}-x_{ji}y_{ij})+x_{k+2,1}^2=\overline{c}_{k+1, k+1}^0+x_{k+2, 1}^2\]
For $k+3\leq i\leq 2k$
\[\overline{c}_{ii}=\sum_{j=2, j\neq i}^{2k}(x_{ij}y_{ji}-x_{ji}y_{ij})=\overline{c}_{i-1,i-1}^0\]

For $2\leq i\leq 2k$ and $i\neq k+1$
\[\overline{c}_{i, 2k+2-i}=\overline{c}_{i-1, 2k+1-i}^0\]

Now we have that \[\overline{\mathcal{I}}=\left(\overline{c}_{ii}, \overline{c}_{i, 2k+2-i}\right)_{1\leq i\leq 2k+1}=\]
\[(-x_{k+1, 1}x_{k+1, 2k+1}-x_{k+2, 1}^2, -x_{k+1, 2k+1}^2, -x_{k+1,1}^2,\]
\[  \overline{c}_{k, k}^0+2x_{k+1, 1}x_{k+1, 2k+1}-x_{2k+1, k+1}^2,\]
\[\overline{c}_{k+1, k+1}^0+x_{k+2, 1}^2, x_{2k+1, k+1}^2-x_{k+1, 1}x_{k+1, 2k+1})+\]
\[(\overline{c}_{ii}^0)_{1\leq i\leq 2k-1,\, i\neq k, k+1}+(\overline{c}_{i-1, 2k+1-i}^0)_{2\leq i\leq 2k, \, i\neq k+1}.\]

Then \[\text{Rad}(\overline{\mathcal{I}})=(x_{k+1, 2k+1}, x_{k+2, 1}, x_{k+1, 1},x_{2k+1, k+1})+\text{Rad}(\overline{c}_{ii}^0, \overline{c}_{i, 2k-i}^0)_{1\leq i\leq 2k-1}.\]
By induction hypothesis, $\text{Rad}(\overline{c}_{ii}^0, \overline{c}_{i, 2k-i}^0)_{1\leq i\leq 2k-1}$ is the homogeneous maximal ideal in $\overline{R}_0=K[\overline{X}_0, \overline{Y}_0]$.
Therefore, we have that $\text{Rad}(\overline{\mathcal{I}})$ is the homogeneous maximal ideal in $\overline{R}=K[\overline{X}, \overline{Y}]$.
   \end{proof}
   
\begin{thm} \label{SOPeven}
Let $n=2k$ be an even integer with $k\geq 2$. Let $K$ be a field of characteristic not equal to 2. 
Let \[\Omega=\left\{(i,j) \, | 1 \leq i\leq k-1, \, 1 \leq j \leq 2k \right\}\cup \]
\[\left\{(k,k) \right\}\cup \left\{(k,k+1) \right\}\cup \left\{(k+1,k+1) \right\}\cup\]
\[ \left\{(k+1,j) \, |  \, k+3 \leq j \leq 2k \right\}\cup\]
\[ \left\{(i,k+1) \, |  \, k+3 \leq i \leq 2k \right\}\cup\]
\[ \left\{(i,j) \, | k+2 \leq i\leq 2k, \, 1 \leq j \leq k-1 \right\}\cup\]
\[ \left\{(i,j) \, | k+2 \leq i\leq 2k, \, k+2 \leq j \leq 2k \right\}\]
 
Then \[x_{ij}, \, y_{ji} \text{ with } (i,j)\in \Omega,\]
\[x_{k, s}-y_{2k+1-s, k}\]\[ \text{ with } 1 \leq s \leq 2k \text{ (notice the cases $s=k, k+1$ are included above in }\Omega ),\]
\[x_{k+1, s}-y_{s, k+1}\]\[ \text{ with } 1 \leq s \leq k+2\text{ (notice the case $s=k+1$ is included above in }\Omega ),\]
\[x_{k+2, k}-y_{k+1, k+2}, \quad x_{k+2, k+1}-y_{k, k+2},\]
\[x_{s, k}-y_{k, s} \text{ with } k+3 \leq s \leq 2k\]
is a  homogeneous system of parameters for $R/\mathcal{I}$. 
\end{thm}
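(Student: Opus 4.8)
The plan is to mimic the proof of Theorem~\ref{SOPodd}: show that the $2(n-1)$ generators of $\mathcal{I}$ (the $n-1$ diagonal and $n-1$ anti-diagonal entries of $C$) together with the listed ring elements generate an $\mathfrak m$-primary ideal of $R$, hence form a homogeneous system of parameters for $R$, so that in particular $R/\mathcal{I}$ is a complete intersection of dimension $2n^2-(2n-2)$. A preliminary bookkeeping step is to check that the number of listed elements equals $2n^2-(2n-2)$, so that together with the $2n-2$ generators of $\mathcal{I}$ they number $2n^2=\dim R$. The argument then goes by induction on $k$, with base case $k=2$, i.e.\ $n=4$: factor $R$ by the listed elements, obtain an explicit pair of $4\times4$ matrices $\overline X,\overline Y$ in a handful of variables, compute the four diagonal and four anti-diagonal entries of $\overline X\,\overline Y-\overline Y\,\overline X$, and check directly that the radical of the ideal they generate is the homogeneous maximal ideal in those variables. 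It is here (or in the analogous reduction one level up) that the hypothesis $\operatorname{char}K\ne2$ first enters, in order to invert a coefficient $2$.

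For the induction step I would pass, exactly as in Theorem~\ref{SOPodd}, to the ring $R_0=K[X_0,Y_0]$ with $X_0,Y_0$ of size $n-2=2(k-1)$ and invoke the induction hypothesis there. After quotienting $R$ by the listed ring elements, $\overline X$ and $\overline Y$ take a ``framed'' form: outside the interior $(n-2)\times(n-2)$ block carrying $X_0$, resp.\ $Y_0$, all entries vanish except for a small pattern of surviving frame variables concentrated in the first and last rows and columns and in rows and columns $k,k+1,k+2$. One then writes out each $\overline c_{ii}$ and each $\overline c_{i,n+1-i}$ and sorts them into two kinds: (a) the corner entries $\overline c_{11},\overline c_{1,n},\overline c_{n,1},\overline c_{n,n}$ and the entries attached to the central rows and columns, whose radical contains all the surviving frame variables---this is where $\operatorname{char}K\ne2$ is used, to pass from an expression of the shape $2\,(\text{frame variable})^2+(\text{element of }\overline{\mathcal I}_0)$ to the frame variable itself---and (b) the remaining entries, each of which, after the frame variables are killed, coincides with a generator $\overline c^0_{ii}$ or $\overline c^0_{i,n-1-i}$ of $\overline{\mathcal I}_0$. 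Consequently $\operatorname{Rad}(\overline{\mathcal I})$ contains all the frame variables together with $\operatorname{Rad}(\overline{\mathcal I}_0)$, which by the induction hypothesis is the homogeneous maximal ideal of $\overline R_0$; hence $\operatorname{Rad}(\overline{\mathcal I})$ is the homogeneous maximal ideal of $\overline R$, as required.

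The step I expect to be the main obstacle is the bookkeeping around the two central rows and columns $k$ and $k+1$: unlike the odd case there is no single central index, the two reflection substitutions $x_{k,s}=y_{2k+1-s,k}$ and $x_{k+1,s}=y_{s,k+1}$ interact with each other and with the extra relations $x_{k+2,k}=y_{k+1,k+2}$ and $x_{k+2,k+1}=y_{k,k+2}$, and one has to verify that the resulting cancellations leave exactly enough frame-variable squares, each with an invertible coefficient, to eliminate every new variable, while none of the generators of $\overline{\mathcal I}_0$ gets corrupted. Checking that the coefficient appearing there is genuinely $\pm2$ rather than $0$ is precisely the point at which characteristic $2$ must be excluded, and is the reason the case $\operatorname{char}K=2$ is handled separately in Theorem~\ref{SOPchar2}.
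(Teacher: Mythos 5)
Your strategy is exactly the paper's: induction on $k$ with base case $n=4$ (where $\operatorname{char}K\neq 2$ is genuinely used to invert coefficients), reduce modulo the listed elements to framed matrices, and reduce the radical computation to the $(n-2)\times(n-2)$ case via $\overline{\mathcal I}_0$. However, your counting premise is wrong, and the preliminary bookkeeping check you propose would flag it. For even $n$ the anti-diagonal of $C$ does not meet the main diagonal---there is no shared central entry as in the odd case---so you cannot discard an anti-diagonal generator. The ideal $\mathcal I$ therefore has $2n-1$ minimal generators ($n-1$ from the trace-zero diagonal plus all $n$ anti-diagonal entries), not $2(n-1)$, and $\dim R/\mathcal I = 2n^2-(2n-1)=2n^2-2n+1$. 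A direct count of $\Omega$ and the listed difference elements does give $2n^2-2n+1$ (not $2n^2-2n+2$), which together with the $2n-1$ generators of $\mathcal I$ makes $2n^2=\dim R$; so the numbers work out, just not the way you have set them up.

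A smaller inaccuracy: in the induction step the four new frame variables $x_{k,1}$, $x_{k,2k}$, $x_{k+1,1}$, $x_{2k,k}$ are already forced into the radical by the four corner entries $\overline c_{11}$, $\overline c_{1,n}$, $\overline c_{n,1}$, $\overline c_{n,n}$ alone, none of which carries a coefficient $2$ or a summand from $\overline{\mathcal I}_0$; the entries near the center, such as $\overline c_{kk}=2x_{k,1}x_{k,2k}-x_{2k,k}^2+\overline c^0_{k-1,k-1}$, then collapse modulo those frame variables to honest generators of $\overline{\mathcal I}_0$. So $\operatorname{char}K\neq 2$ is consumed entirely in the $n=4$ base case and enters the inductive step only through the induction hypothesis, not through any coefficient-$2$ inversion there as your description of the mechanism suggests.
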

\begin{proof}
As in the previous theorem for matrices of odd size, we prove the theorem by showing that the generators of $\mathcal{I}$, $n-1$ entries of the  main diagonal and $n$ entries of the anti-diagonal of $C$, together with the ring elements from the statement of the theorem form a system of parameters for $R$. We achieve this by showing that the ideal they generate is $\mathfrak{m}$-primary. It is not difficult to compute that their number is exactly equal to $2n^2$, the dimension of $R$. Moreover, we use induction on $k$.\\

Let $k=2$. The idea of the proof is similar to that of the proof of the case $n=3$. Factor out by the ideal generated by the elements of the sequence. Then we have the following matrices
\[\overline{X}=\left[\begin{array}{cccc}
0& 0&0&0\\
x_{21}& 0&0&x_{24}\\
x_{31}&x_{32}&0&x_{34}\\
0&x_{42}&x_{43}&0\end{array}\right] \text{ and } \overline{Y}=\left[\begin{array}{cccc}
0& x_{24}&x_{31}&0\\
0& 0&x_{32}&x_{43}\\
0&0&0&x_{42}\\
0&x_{21}&x_{34}&0\\
\end{array}\right]\]
and the commutator matrix becomes 

$$\overline{XY-YX}=$$\\
\tabcolsep=0.001cm
\scalebox{0.8}{$\left[\begin{array}{cccc}
-x_{21}x_{24}-x_{31}^2&\star&\star&-x_{31}x_{34}-x_{24}^2\\
\star&2x_{21}x_{24}-x_{32}^2-x_{42}x_{43}&x_{21}x_{31}-x_{24}x_{34}-x_{43}^2&\star\\
\star&x_{24}x_{31}-x_{21}x_{34}-x_{42}^2&x_{31}^2+x_{32}^2+x_{34}^2-x_{42}x_{43}&\star\\
-x_{31}x_{34}-x_{21}^2&\star&\star&2x_{42}x_{43}-x_{21}x_{24}-x_{34}^2\\
\end{array}\right]$}\\

We show that the ideal generated by the diagonal and anti-diagonal entries  of the matrix above is primary to the maximal ideal in the corresponding polynomial ring $K[x_{21}, x_{24}, x_{31}, x_{32}, x_{34}, x_{42}, x_{43}]$. We achieve this by using homogeneous Nullstellensatz theorem. That is, we show that the zero set of the ideal is trivial. 

First, observe that $x_{21}^2=x_{24}^2$ and $x_{31}^4=x_{21}^2x_{24}^2$. Hence, $x_{31}^4=x_{31}^2x_{34}^2$ and either $x_{31}=0$ or $x_{31}^2=x_{34}^2$. In the first case, it is not hard to see that this implies that we get the trivial solution. 
In the second case we have that $0=2x_{32}^2+x_{31}^2+x_{34}^2-2x_{21}x_{24}=2x_{32}^2-4x_{31}^2$. Since the characteristic of the field is not 2, we obtain that $x_{32}^2=2x_{31}^2$.  Then $x_{42}x_{43}=4x_{31}^2$ and we get that $0=2x_{42}x_{43}-x_{21}x_{24}-x_{34}^2=8x_{31}^2$, which implies that $x_{31}=0$ and again we have the trivial solution only. 

Now we are ready to prove the general statement of the theorem. From now assume that $k\geq 3$ and for all pairs of matrices of size smaller than $2k$, the statement of the theorem holds. Factor $R$ by the ideal generated by the ring elements from the statement. Then we have the following matrices
\begin{small}\[\overline{X}=
\begin{blockarray}{cccccc}
\begin{block}{[c|ccc|c]c}
  0& 0&\ldots &0&0 \\ 
  \cline{1-6}
  0 &&&&0\\
  \vdots &&&& \vdots \\
     0 &&&& 0 \\
x_{k,1} & &&& x_{k, 2k} \\
  x_{k+1,1} &&\raisebox{-5pt}{{\Large \mbox{{$X_0$}}}}&& 0 \\
  0 &&&& 0 \\
  \vdots &&&& \vdots \\
   0 &&&& 0 \\
  \cline{1-6}
   0&0 \ldots 0&x_{2k, k}&0 \ldots 0&0\\ 
    \end{block}
\end{blockarray}\]
and 
\[\overline{Y}=
\begin{blockarray}{cccccc}
&&k\text{-column }\hspace{3mm} &&\\
&&\swarrow \quad \hspace{3mm}&&\\
\begin{block}{[c|ccc|c]c}
 0&0 \ldots 0&x_{k, 2k}\quad x_{k+1, 1}&0 \ldots 0&0\\  
 \cline{1-5}
    0 &&&&0\\
\vdots &&&& \vdots \\
     0 &&&& 0 \\
0 && \raisebox{-5pt}{{\Large \mbox{{$Y_0$}}}}&& x_{2k, k} &\leftarrow k\text{-row} \\
   0 &&&& 0 \\
  \vdots &&&& \vdots \\
   0 &&&& 0 \\
  \cline{1-5}
   0&0 \ldots 0&x_{k,1}\quad \quad 0&0 \ldots 0&0\\  \end{block}
\end{blockarray} \]
\end{small}

\[\overline{c}_{11}=\sum_{j=2}^{2k}(x_{1j}y_{j1}-x_{j1}y_{1j})=-x_{k, 1}x_{k, 2k}-x_{k+1, 1}^2\]
\[\overline{c}_{1,2k}=-x_{k, 2k}^2\]
\[\overline{c}_{2k,1}=-x_{k, 1}^2\]
\[\overline{c}_{2k,2k}=x_{2k, k}^2-x_{k, 1}x_{k, 2k}\]
For $2\leq i\leq k-1$
\[\overline{c}_{ii}=\overline{c}_{i-1,i-1}^0\]
\[\overline{c}_{kk}=2x_{k, 1}x_{k, 2k}-x_{2k, k}^2+\overline{c}_{k-1,k-1}^0\]
\[\overline{c}_{k+1,k+1}=x_{k+1, 1}^2+\overline{c}_{k, k}^0\]
For $k+2 \leq i \leq 2k-1$, \[\overline{c}_{ii}=\overline{c}_{i-1,i-1}^0, \] \\

For $2\leq i\leq k-1$ and $k+2\leq i \leq 2k-1$, \[\overline{c}_{i, 2k+1-i}=\overline{c}_{i-1, 2k-i}^0,\] 
\[\overline{c}_{k,k+1}=x_{k, 1}x_{k+1, 1}+\overline{c}_{k-1, k}^0, \]
\[\overline{c}_{k+1,k}=x_{k+1, 1}x_{k, 2k}+\overline{c}_{k, k-1}^0.\]
Now we have that \[\overline{\mathcal{I}}=(-x_{k, 1}x_{k, 2k}-x_{k+1, 1}^2,-x_{k, 2k}^2, -x_{k, 1}^2, x_{2k, k}^2-x_{k, 1}x_{k,2k}, x_{k, 1}x_{k+1, 1}+\overline{c}_{k-1, k}^0, \]
\[ x_{k+1, 1}x_{k, 2k}+\overline{c}_{k, k-1}^0)+
(\overline{c}_{ii}^0)_{1\leq i\leq k-2,\, k+1\leq i \leq 2k-2}\, +(\overline{c}_{i, 2k+1-i}^0)_{1\leq i\leq k-2,\, k+1\leq i\leq 2k-2}\]

Then $\text{Rad}(\overline{\mathcal{I}})=(x_{k, 2k}, x_{k, 1}, x_{2k, k},x_{k+1, 1})+\text{Rad}\left(\overline{c}_{ii}^0, \overline{c}_{i, 2k+1-i}^0\right)_{1\leq i\leq 2k-2}$. 
By induction hypothesis, $\text{Rad}(\overline{c}_{ii}^0,\overline{c}_{i, 2k+1-i}^0)_{1\leq i\leq 2k-2}$ is the homogeneous maximal ideal in $\overline{R}_0=K[\overline{X}_0, \overline{Y}_0]$. 
Therefore, we have that $\text{Rad}(\overline{\mathcal{I}})$ is the homogeneous maximal ideal in $\overline{R}=K[\overline{X}, \overline{Y}]$. 

\end{proof}

\begin{thm}\label{SOPchar2}
Let $n=2k$ be an even integer with $k\geq 2$. Let $K$ be a field of characteristic 2. 
Let \[\Omega_2=\left\{(i,j) \, | \, 1 \leq i\leq k-1, \, 1 \leq j \leq 2k \right\}\cup \]
\[\left\{(k,j) \, |\,   k-1 \leq j \leq k+1 \right\}\cup \]
\[\left\{(k+1,k+1), \, (k+2,k+2) \right\}\cup \]
\[\left\{(k+1,j) \, | \, k+3 \leq j \leq 2k \right\}\cup \]
\[\left\{(k+2,j) \, | \, 1 \leq j \leq k-2 \right\}\cup \]
\[\left\{(k+2,j) \, | \, k+2 \leq j \leq 2k \right\}\cup \]
\[\left\{(i,j) \, |\, k+3\leq i\leq 2k, \,  1 \leq j \leq k-1 \right\}\cup \]
\[\left\{(i,j) \, |\, k+3\leq i\leq 2k, \,  k+1 \leq j \leq 2k \right\} \]
Then \[x_{ij}, \, y_{ji} \text{ with } (i,j)\in \Omega_2,\]
\[x_{k, s}-y_{2k+1-s, k} \text{ with } 1 \leq s \leq k-2 \text{ and } k+3 \leq s \leq 2k,\]
\[x_{k+1, s}-y_{s, k+1} \text{ with } 1 \leq s \leq k-2,\]
\[x_{s, k}-y_{k, s} \text{ with } k+3 \leq s \leq 2k,\]
\[x_{k, k+2}-y_{k+2, k},\,  x_{k+1, k-1}-y_{k+2, k+1}, \, x_{k+1, k}-y_{k, k+1}\]
\[x_{k+1, k+2}-y_{k-1, k+1}, \, x_{k+2, k-1}-y_{k-1, k+2}, x_{k+2, k}-y_{k+1, k+2}, x_{k+2, k+1}-y_{k, k+2}\]
is a  homogeneous system of parameters for $R/\mathcal{I}$. 
\end{thm}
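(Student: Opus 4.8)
The plan is to follow the same strategy used in Theorems~\ref{SOPodd} and~\ref{SOPeven}: show that the $2n-1$ generators of $\mathcal{I}$ (the $n-1$ diagonal entries and the $n$ anti-diagonal entries of $C$) together with the ring elements listed in the statement form a system of parameters for $R$, by proving that the ideal they generate is $\mathfrak{m}$-primary. First I would check the count: the number of listed elements plus $2n-1$ equals $2n^2=\dim R$, so it is enough to establish $\mathfrak{m}$-primariness. As before, the listed elements (the variables $x_{ij},y_{ji}$ with $(i,j)\in\Omega_2$ and the various differences $x_{\bullet}-y_{\bullet}$) serve to eliminate coordinates: factoring $R$ by the ideal they generate leaves a polynomial ring $\overline{R}=K[\overline{X},\overline{Y}]$ in the surviving variables, and one must show that the image $\overline{\mathcal{I}}$, generated by the diagonal and anti-diagonal entries of $\overline{C}=\overline{X}\,\overline{Y}-\overline{Y}\,\overline{X}$, is $\mathfrak{m}$-primary in $\overline{R}$. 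The argument is by induction on $k$ with base case $k=2$.

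For the base case $n=4$ I would write down the (sparse) matrices $\overline{X},\overline{Y}$ explicitly, compute $\overline{C}$, and extract its diagonal and anti-diagonal entries. Over a field of characteristic $2$ the mixed terms that carried a coefficient $2$ in the proof of Theorem~\ref{SOPeven} now vanish, so these relations take a different shape; this is precisely why the three middle rows $k,k+1,k+2$ (rather than only $k,k+1$) are singled out in $\Omega_2$. I would then invoke the homogeneous Nullstellensatz: using the pure squares and near-squares among the $\overline{c}_{ij}$ (entries of the form $x^2$ and $x^2-x'x''$, where $x\mapsto x^2$ is a bijection on the algebraically closed solution field), one chases through the equations to force every surviving variable to vanish, so the only common zero is the origin and $\overline{\mathcal{I}}$ is $\mathfrak{m}$-primary.

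For the induction step $k\geq 3$ I would apply the induction hypothesis in $R_0=K[X_0,Y_0]$ (size $n-2=2k-2$). Factoring $R$ by the listed elements produces block matrices $\overline{X},\overline{Y}$ carrying $X_0,Y_0$ in the central $(2k-2)\times(2k-2)$ block, with the first and last rows and columns zero apart from a handful of border entries built from $x_{k,1},x_{k,2k},x_{k+1,1},x_{2k,k}$ and the like, exactly as in Theorems~\ref{SOPodd} and~\ref{SOPeven}. A direct computation then shows that for all but finitely many indices $\overline{c}_{ii}=\overline{c}^0_{i-1,i-1}$ and $\overline{c}_{i,2k+1-i}=\overline{c}^0_{i-1,2k-i}$, while for the central indices these entries equal the corresponding $\overline{c}^0$ plus a quadratic expression in the border variables, together with a few entries that are pure squares or near-squares in the border variables alone. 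Passing to radicals, those latter entries put all the border variables into $\mathrm{Rad}(\overline{\mathcal{I}})$, so $\mathrm{Rad}(\overline{\mathcal{I}})=(\text{border variables})+\mathrm{Rad}\big(\overline{c}^0_{ii},\overline{c}^0_{i,2k-i}\big)_i$; by the induction hypothesis the latter radical is the homogeneous maximal ideal of $\overline{R}_0$, whence $\mathrm{Rad}(\overline{\mathcal{I}})$ is the homogeneous maximal ideal of $\overline{R}$, completing the step.

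The main obstacle is the characteristic-$2$ bookkeeping. Because the coefficient-$2$ terms collapse, one cannot make do with only two special middle rows as in the characteristic $\neq 2$ case; instead $\Omega_2$ and the difference-elements must be arranged so that, simultaneously, (a) the total count is exactly $2n^2$, and (b) after factoring, the surviving diagonal and anti-diagonal entries of $\overline{C}$ still pin down every border variable in the radical. Verifying (b) --- that in characteristic $2$ the central entries of $\overline{C}$ still carry enough pure squares of border variables --- is the delicate point, both in the $n=4$ base case and in the induction step; once it is checked, the block computation and the reduction to the induction hypothesis go through exactly as in Theorems~\ref{SOPodd} and~\ref{SOPeven}.
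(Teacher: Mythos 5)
Your proposal follows the paper's strategy exactly: count that the generators of $\mathcal{I}$ together with the listed elements total $2n^2$, verify $\mathfrak{m}$-primariness by explicit computation at $n=4$ via the homogeneous Nullstellensatz (where the pure squares among the $\overline{c}_{ij}$ make the chase straightforward in characteristic $2$), and then run an induction with step $2$ that reduces the central block to $X_0,Y_0$ and invokes the hypothesis in $R_0$ once the border variables are captured in $\operatorname{Rad}(\overline{\mathcal{I}})$. The paper itself gives only the $n=4$ matrices and states that the inductive step ``follows exactly the same lines as the induction step in Theorem~\ref{SOPeven}'', so your sketch matches its level of detail and its approach; the only small inaccuracy is the aside that characteristic $\neq 2$ uses ``only two'' distinguished middle rows --- in fact both $\Omega$ and $\Omega_2$ give special treatment to rows $k,k+1,k+2$, and the difference is in which entries of those rows (and the corresponding identification elements) are selected, not in the number of rows.
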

\begin{proof}
We use the same approach as in the previous two theorems. 

First, we show that the theorem is true for $k=2$. 

After factoring out $R$ by the elements of the sequence we obtain the following matrices
\[\overline{X}=\left[\begin{array}{cccc}
0& 0&0&0\\
0& 0&0&x_{24}\\
x_{31}&x_{32}&0&x_{34}\\
x_{41}&x_{42}&x_{43}&0\end{array}\right] \text{ and } \overline{Y}=\left[\begin{array}{cccc}
0& 0&x_{34}&x_{41}\\
0& 0&x_{32}&x_{43}\\
0&0&0&x_{42}\\
0&x_{24}&x_{31}&0\\
\end{array}\right]\]
and the commutator matrix becomes 

$$\overline{XY-YX}=$$\\
\tabcolsep=0.001cm
\scalebox{1}{$\left[\begin{array}{cccc}
x_{31}x_{34}+x_{41}^2&\star&\star&x_{34}^2\\
\star&x_{24}^2+x_{32}^2+x_{42}x_{43}&x_{24}x_{31}+x_{43}^2&\star\\
\star&x_{24}x_{34}+x_{42}^2&x_{32}^2+x_{42}x_{43}&\star\\
x_{31}^2&\star&\star&x_{41}^2+x_{24}^2+x_{31}x_{34}\\
\end{array}\right]$}\\

Similarly to the above proofs, it is not difficult to show that $\text{Rad}(\overline{\mathcal{I}})$ is equal to the homogeneous maximal ideal in the quotient ring of $R$ modulo the ideal generated by the sequence elements. 

The induction step follows exactly the same lines as the induction step in Theorem \ref{SOPeven}. 
\end{proof}

\begin{thm}
The ideal $\mathcal{I}$ is generated by a regular sequence, that is, $R/\mathcal{I}$ is a complete intersection of dimension $2n^2-2n+2$, when $n$ is odd, and $2n^2-2n+1$, when $n$ is even. 
\end{thm}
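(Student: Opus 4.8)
The plan is to derive this statement formally from Theorems~\ref{SOPodd}, \ref{SOPeven} and \ref{SOPchar2}, using only that $R = K[X,Y]$ is a polynomial ring in $2n^2$ variables over a field and hence a Cohen--Macaulay graded ring of Krull dimension $2n^2$. The two facts I would rely on are: in a Cohen--Macaulay graded ring every homogeneous system of parameters is a regular sequence, and a permutation of a regular sequence of homogeneous elements of positive degree is again a regular sequence (this last point reduces to the corresponding well-known statement for Cohen--Macaulay local rings by localizing at the irrelevant maximal ideal $\mathfrak{m}$, since for homogeneous elements of positive degree regularity in $R$ and in $R_{\mathfrak{m}}$ coincide).

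First I would fix the parity of $n$ --- and, when $n$ is even, the characteristic of $K$ --- and invoke the appropriate one of the three theorems. Each of them exhibits a homogeneous system of parameters for $R$ consisting of the chosen generators of $\mathcal{I}$ (namely $n-1$ entries of the main diagonal of $C$ together with $n-1$ entries of the anti-diagonal when $n$ is odd, or $n$ entries of the anti-diagonal when $n$ is even) together with the explicitly listed linear ring elements, the whole list having cardinality exactly $2n^2 = \dim R$. Since $R$ is Cohen--Macaulay this list is a regular sequence in $R$; permuting it so that the generators of $\mathcal{I}$ come first, we conclude that $\mathcal{I}$ is generated by a regular sequence, i.e. $R/\mathcal{I}$ is a complete intersection.

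It then remains to compute the dimension, which comes down to counting the generators of $\mathcal{I}$ as in the second Remark of the Introduction. Because the trace of $C$ is zero one of the $n$ diagonal entries is redundant, and when $n$ is odd the central entry $c_{k+1,k+1}$ lies on both the diagonal and the anti-diagonal; hence $\mathcal{I}$ is generated by $2n-2$ of the listed entries when $n$ is odd and by $2n-1$ of them when $n$ is even, and these generating sets are minimal since a regular sequence is always a minimal generating set of the ideal it generates. As the generators form a regular sequence, the height of $\mathcal{I}$ equals their number, so $\dim R/\mathcal{I} = 2n^2 - (2n-2) = 2n^2 - 2n + 2$ for $n$ odd and $\dim R/\mathcal{I} = 2n^2 - (2n-1) = 2n^2 - 2n + 1$ for $n$ even.

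The mathematical substance is entirely contained in Theorems~\ref{SOPodd}--\ref{SOPchar2}, so I do not expect any genuine obstacle here. The only points that call for care are checking that the list of parameters in each of those theorems really has $2n^2$ members --- so that the generators of $\mathcal{I}$ form part of a full system of parameters rather than a partial one, which is the count asserted to be ``not difficult'' in those proofs --- and keeping the graded-versus-local bookkeeping straight when invoking ``system of parameters $\Rightarrow$ regular sequence'' and its permutation-invariance.
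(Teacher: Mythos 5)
Your proof is correct and takes essentially the same approach as the paper's: both reduce the statement to Theorems~\ref{SOPodd}--\ref{SOPchar2} together with the Cohen--Macaulayness of the polynomial ring $R$. The paper phrases the final step as ``height of $\mathcal{I}$ equals the minimal number of generators, hence $\mathcal{I}$ is generated by a regular sequence,'' while you instead observe that the combined list (generators of $\mathcal{I}$ plus the remaining parameters) is a system of parameters for $R$, hence a regular sequence, and then permute so the generators of $\mathcal{I}$ come first; this is simply a more explicit unwinding of the same standard Cohen--Macaulay mechanism, and your care with the graded-versus-local bookkeeping and with the cardinality count is appropriate.
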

\begin{proof}
The ideal $\mathcal{I}$ is generated by $2n-2$ and $2n-1$ elements when $n$ is odd and even, respectively. In the above three theorems we found a system of parameters on $R/\mathcal{I}$ of length $2n^2-2n+2$ and $2n^2-2n+1$,  respectively. Therefore, the height of $\mathcal{I}$ is equal to the number of its generators. Thus the result. 
\end{proof}
\textbf{Assumption.} For the rest of the paper, when we say "part of a system of parameters for $R/\mathcal{I}$" we mean part of a system of parameters from one of the Theorems \ref{SOPodd}, \ref{SOPeven}, \ref{SOPchar2} such that those ring elements from the system which identify/associate $x$'s and $y$'s are omitted. 

 \section{F-purity}

In this section we prove that the algebraic set of pairs of matrices defined by the vanishing of the main diagonal and the main anti-diagonal of their commutator matrix is $F$-pure for matrices of all sizes and in all positive prime characteristics. We do so by using the fact that the algebraic set is a complete intersection and that $F$-purity deforms for Gorenstein rings, \cite{fedder}, Theorem 3.4. That is, we prove the result by showing that the ring $R/\mathcal{I}$ is $F$-pure once we factor it by a regular sequence.  

Similarly to the above section, we split our proof into two cases based on the parity of $n$ and proceed by induction on the size of the matrices. Therefore, we first state several results for small values of $n$. 

\begin{prop} Let $n=3$ and $K$ be a field of positive prime characteristic $p$. Then $R/\mathcal{I}$ is an $F$-pure ring.  \end{prop}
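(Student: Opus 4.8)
The plan is to verify $F$-purity directly via Fedder's criterion. Since $R/\mathcal{I}$ is a complete intersection by the previous section, it is Gorenstein, so we may use that $F$-purity deforms for Gorenstein rings (\cite{fedder}, Theorem 3.4): it suffices to kill a maximal regular sequence consisting of ``identification'' elements (the $x$'s and $y$'s, and the differences $x_{ij}-y_{ji}$) from the system of parameters in Theorem \ref{SOPodd}, and show the resulting quotient is $F$-pure. Concretely, for $n=3$ and $k=1$, factoring $R$ by the $14$ sequence elements of Theorem \ref{SOPodd} leaves the ring $S=K[x_{21},x_{23},x_{31},x_{32}]$ together with the two generators of $\mathcal{I}$ that survive as honest relations, so that $R/\mathcal{I}$ after this deformation becomes $S/\mathfrak{a}$ where $\mathfrak{a}$ is generated by (appropriate scalar multiples of) the surviving commutator entries. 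From the computation in the proof of Theorem \ref{SOPodd}, the relevant entries reduce to the quadrics $-x_{23}^2$, $-x_{21}^2$, $-x_{21}x_{23}-x_{31}^2$, $2x_{21}x_{23}-x_{32}^2$, $x_{31}^2+x_{32}^2-x_{21}x_{23}$, whose ideal has height $2$; I would pick two of these forming a regular sequence — say $g_1 = x_{21}x_{23}+x_{31}^2$ and $g_2 = x_{32}^2 - 2x_{21}x_{23}$ (valid in all characteristics, with care taken when $p=2$, where $g_2$ becomes $x_{32}^2$ and one instead uses $2x_{21}x_{23}-x_{32}^2 = -x_{32}^2$ together with the entry $x_{31}^2+x_{32}^2-x_{21}x_{23}$, or more cleanly $x_{23}^2$ and $x_{21}x_{23}+x_{31}^2$) — and set $S/\mathfrak{a} = S/(g_1,g_2)$.

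Next I would apply Fedder's criterion: a complete intersection $S/(g_1,g_2)$ with $S$ regular of dimension $d$ is $F$-pure if and only if $(g_1 g_2)^{p-1} \notin \mathfrak{n}^{[p]}$, where $\mathfrak{n}$ is the homogeneous maximal ideal of $S$ and $\mathfrak{n}^{[p]}$ is generated by the $p$th powers of the variables. So the whole proposition comes down to checking that the monomial expansion of $\bigl((x_{21}x_{23}+x_{31}^2)(x_{32}^2-2x_{21}x_{23})\bigr)^{p-1}$ contains a monomial in which every exponent is at most $p-1$. Expanding, $(g_1 g_2)^{p-1}$ is a $K$-linear combination of monomials $x_{21}^{a}x_{23}^{a}x_{31}^{2b}x_{32}^{2c}x_{21}^{e}x_{23}^{e}$ coming from choosing terms in the binomial expansions; the natural candidate to track is the ``balanced'' monomial where one takes $x_{31}^2$ from roughly half the $g_1$-factors and $x_{21}x_{23}$ from the other half, and $x_{32}^2$ from roughly half the $g_2$-factors. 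One then argues that for a suitable split the total exponent of each of $x_{21}, x_{23}, x_{31}, x_{32}$ stays $\le p-1$ and the coefficient (a product of binomial coefficients times a power of $\pm 2$, or the analogous char-$2$ object) is nonzero mod $p$. This is the kind of explicit-but-routine bookkeeping I would carry out case by case; a cleaner alternative is to cite that a complete intersection defined by monomials in disjoint sets of variables, or by forms whose leading terms (under a suitable term order) generate a square-free-enough initial ideal, is $F$-pure — e.g. by exhibiting that $\operatorname{in}(g_1,g_2)$ is generated by $x_{31}^2, x_{32}^2$ after a change of variables and invoking that a diagonal hypersurface-type complete intersection $x_{31}^2=x_{32}^2=0$ deforms back.

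The main obstacle is the characteristic-$2$ case: there the scalar $2$ vanishes, several of the commutator entries degenerate (e.g.\ $2x_{21}x_{23}-x_{32}^2$ collapses to $-x_{32}^2$), and one must re-choose the regular sequence cut out from the surviving entries and re-run Fedder's criterion with the $p=2$ exponent $(g_1g_2)^{1}=g_1g_2$, i.e.\ check $g_1 g_2 \notin \mathfrak{n}^{[2]} = (x_{21}^2,x_{23}^2,x_{31}^2,x_{32}^2)$. With the char-$2$ choice $g_1=x_{21}x_{23}+x_{31}^2$ (from $\overline{c}_{11}$ up to the char-$2$ form) and $g_2=x_{32}^2+x_{31}^2+x_{21}x_{23}$ (from $\overline{c}_{33}$), the product $g_1g_2$ contains the monomial $x_{21}x_{23}x_{32}^2$, which is not in $(x_{21}^2,x_{23}^2,x_{31}^2,x_{32}^2)$ — though one must double-check that no other term of the expansion cancels it, which in char $2$ it cannot since $x_{21}x_{23}x_{32}^2$ arises uniquely. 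I would also need to double-check that the two chosen generators actually form a regular sequence in $S$ (equivalently, that the height of $\mathfrak{a}$ is $2$ and they generate a height-$2$ subideal), which follows from the height count already established in the proof of Theorem \ref{SOPodd} together with $S$ being Cohen--Macaulay.
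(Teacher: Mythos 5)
Your proposal contains a fundamental misreading of the paper's reduction step, and this breaks the argument before it reaches Fedder's criterion. Concretely: the paper's ``Assumption'' (immediately after the complete intersection theorem) says that when one factors by ``part of a system of parameters for $R/\mathcal{I}$,'' the \emph{identification} elements $x_{ij}-y_{kl}$ are \emph{omitted}. You instead kill all $14$ sequence elements, landing in $S=K[x_{21},x_{23},x_{31},x_{32}]$. That is not a deformation the authors use, and for good reason: in $S$ the image of $\mathcal{I}$ is $(x_{23}^2,\,x_{21}^2,\,x_{21}x_{23}+x_{31}^2,\,2x_{21}x_{23}-x_{32}^2)$, which has radical equal to the maximal ideal of $S$ (this is exactly what the proof of Theorem~\ref{SOPodd} establishes for $k=1$), hence height $4$ --- not height $2$ as you assert. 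All four generators ``survive as honest relations,'' and they form a full regular sequence in $S$; there is no way to drop two of them. Consequently, the Fedder product you would need to examine is $(g_1 g_2 g_3 g_4)^{p-1}$, which has total degree $8(p-1)$ in $4$ variables and therefore automatically lies in $\mathfrak{n}^{[p]}=(x_{21}^p,x_{23}^p,x_{31}^p,x_{32}^p)$ (any monomial of degree $>4(p-1)$ in $4$ variables must have some exponent $\geq p$). So the Artinian quotient you constructed is \emph{not} $F$-pure, and the deformation theorem yields nothing.

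The paper avoids this by retaining the $y$-variables: after killing only the sequence elements of the form $x_{ij}$ and $y_{ji}$ (and not the differences), one works in the $8$-variable ring $K[x_{21},x_{23},x_{31},x_{32},y_{12},y_{13},y_{23},y_{32}]$, where
\[
\overline{\mathcal{I}}=\bigl(x_{21}y_{32},\; x_{23}y_{12},\; x_{21}y_{12}+x_{31}y_{13},\; x_{31}y_{13}+x_{32}y_{23}-x_{23}y_{32}\bigr).
\]
Now each generator is a bilinear form in \emph{distinct} $x$- and $y$-variables, and the $(p-1)$st power of their product is degree $8(p-1)$ in $8$ variables, so it can avoid $\mathfrak{m}^{[p]}$: indeed the monomial $(x_{21}x_{23}x_{31}x_{32}y_{12}y_{13}y_{23}y_{32})^{p-1}$ occurs with coefficient exactly $1$ (it is forced to be built by taking $x_{31}y_{13}$ from the third factor and $x_{32}y_{23}$ from the fourth, uniquely), and Fedder's criterion applies. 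Your plan in the $4$-variable ring cannot be patched by a cleverer choice of monomial or term order, because the degree obstruction is intrinsic; the fix is to keep the $y$'s as the paper does.
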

\begin{proof}
By factoring out part of the system of parameters from Theorem \ref{SOPodd} we obtain the following matrices.  
\[\overline{X}=\left[\begin{array}{ccc}
0& 0&0 \\
x_{21}& 0&x_{23} \\
x_{31} & x_{32}&0
\end{array}\right] \text{ and } \overline{Y}=\left[\begin{array}{ccc}
0& y_{12}&y_{13} \\
0& 0&y_{23} \\
0& y_{32}&0
\end{array}\right]\]
and the commutator matrix becomes 
\[\overline{XY-YX}=\]\[\left[\begin{array}{ccc}
-x_{21}y_{12}-x_{31}y_{13}& \star&-x_{23}y_{12} \\
\star& x_{21}y_{12}+x_{23}y_{32}-x_{32}y_{23}&\star \\
-x_{21}y_{32}& \star&x_{31}y_{13}+x_{32}y_{23}-x_{23}y_{32}
\end{array}\right].\]
Therefore, the image of the ideal $\overline{\mathcal{I}}=(x_{21}y_{32}, x_{23}y_{12}, x_{21}y_{12}+x_{31}y_{13}, x_{31}y_{13}+x_{32}y_{23}-x_{23}y_{32})$. 
Then since $\mathcal{I}$ is a complete intersection ideal, we have that \[\overline{\mathcal{I}}^{[p]}:\overline{\mathcal{I}}=(x_{21}y_{32}x_{23}y_{12}(x_{21}y_{12}+x_{31}y_{13})( x_{31}y_{13}+x_{32}y_{23}-x_{23}y_{32}))^{p-1}+\overline{\mathcal{I}}^{[p]}=\]
\[\left(x_{21}^{p-1}x_{23}^{p-1}y_{12}^{p-1}y_{32}^{p-1}(x_{21}y_{12}+x_{31}y_{13})^{p-1}( x_{31}y_{13}+x_{32}y_{23}-x_{23}y_{32}))^{p-1}\right)+\overline{\mathcal{I}}^{[p]}\]
\[\not\subset \mathfrak{m}^{[p]}\]
since 
\[x_{21}^{p-1}x_{23}^{p-1}y_{12}^{p-1}y_{32}^{p-1}(x_{21}y_{12}+x_{31}y_{13})^{p-1}( x_{31}y_{13}+x_{32}y_{23}-x_{23}y_{32}))^{p-1}\]
has a monomial term 
\[(x_{21}x_{23}y_{12}y_{32}x_{31}y_{13}x_{32}y_{23})^{p-1}\] with coefficient 1. This is true since there is a unique way to obtain this term. Thus by Fedder's criterion \cite{fedder} (Theorem 1.12), $R/\overline{\mathcal{I}}$ is $F$-pure and so is $R/\mathcal{I}$. 
\end{proof}

\begin{thm} Let $n=2k+1$ be an odd integer with $k \geq 1$ and let $K$ be a field of positive prime characteristic $p$. Then $R/\mathcal{I}$ is an $F$-pure ring.  \end{thm}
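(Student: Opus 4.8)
The plan is to prove $F$-purity of $R/\mathcal{I}$ for odd $n = 2k+1$ by induction on $k$, with the base case $k=1$ being the preceding proposition. As in Theorem \ref{SOPodd}, I would first factor $R$ by the part of the system of parameters from that theorem (omitting the $x$-$y$ identifications), so that the matrices $\overline{X}$, $\overline{Y}$ acquire the block shape displayed there: an inner $(n-2)\times(n-2)$ block carrying generic $X_0, Y_0$ surrounded by a border consisting of row/column $k+1$ and the first/last rows and columns, most of whose entries have been set to zero. Since $\mathcal{I}$ is a complete intersection (by the theorem just proved), it suffices by Fedder's criterion to show $\overline{\mathcal{I}}^{[p]} : \overline{\mathcal{I}} \not\subset \mathfrak{m}^{[p]}$ in this quotient, and because $\overline{\mathcal{I}}$ is generated by a regular sequence $g_1,\dots,g_m$ of length $m = 2n-2$, the colon ideal is $\big((g_1\cdots g_m)^{p-1}\big) + \overline{\mathcal{I}}^{[p]}$, so everything reduces to showing the single polynomial $(g_1\cdots g_m)^{p-1}$ has a monomial not in $\mathfrak{m}^{[p]}$, i.e.\ a monomial in which every variable appears to a power that is a multiple of $p$ but the polynomial itself is not in $\mathfrak{m}^{[p]}$ — concretely, that some monomial of $g_1\cdots g_m$ that is squarefree-like (each variable to a power $\le$ something controlled) survives, so its $(p-1)$st power has coefficient not divisible by $p$.

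The key step is to read off the generators $\overline{c}_{ii}$, $\overline{c}_{i,2k+2-i}$ in the factor ring. Following Theorem \ref{SOPodd}, these split into: the four "corner" generators involving the border variables $x_{k+1,1}, x_{k+1,2k+1}, x_{k+2,1}, x_{2k+1,k+1}$ (as in the proposition's base case), the two "interface" generators $\overline{c}_{kk}^0 + (\text{border terms})$ and $\overline{c}_{k+1,k+1}^0 + x_{k+2,1}^2$ and $\overline{c}_{k+1,k+1}$-type corrections, and a block of generators equal to the $\overline{c}_{ii}^0$ and $\overline{c}_{i,2k+1-i}^0$ — the defining generators of $\mathcal{I}_0$ in $R_0$. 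The plan is to invoke the induction hypothesis applied to $R_0/\mathcal{I}_0$ (again via Fedder, using that it too is a complete intersection): the product of the $\mathcal{I}_0$-generators raised to $p-1$ contains a "good" monomial $\mu_0$ not in $\mathfrak{m}_0^{[p]}$. One then multiplies $\mu_0$ by the contribution from the border generators. For those border generators I would argue, exactly as in the $n=3$ proposition, that their product contains the monomial $(x_{k+1,1} x_{k+1,2k+1} x_{k+2,1} x_{2k+1,k+1} \cdot \text{(a few more border products)})$ whose relevant variables appear uniquely, hence with coefficient $1$ in the expansion, so raising to the $(p-1)$st power keeps coefficient $1$. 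Multiplying the border monomial by $\mu_0$ gives a monomial of the full product $(g_1\cdots g_m)$ that uses each variable to a controlled power and arises uniquely; its $(p-1)$st power then witnesses $\overline{\mathcal{I}}^{[p]} : \overline{\mathcal{I}} \not\subset \mathfrak{m}^{[p]}$.

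The main obstacle is the uniqueness/coefficient bookkeeping at the interface, i.e.\ where the border generators $\overline{c}_{kk}$, $\overline{c}_{k+1,k+1}$, $\overline{c}_{k,k+1}$-type generators mix an $\mathcal{I}_0$-term $\overline{c}_{ii}^0$ with border monomials: I must choose, inside the expansion of each such generator, either the $\overline{c}^0$-part (feeding the inductive monomial $\mu_0$) or a border-monomial part, in a way that is globally consistent and such that the chosen full monomial cannot be produced by any other selection across all $m$ generators. This is the same phenomenon handled in the $n=3$ case ("there is a unique way to obtain this term"), but with more moving parts; the care is in verifying that the border variables that get "used up" by corner generators are disjoint from those available to the interface generators, so that the factorization of the target monomial into one term per generator is forced. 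I would also need to double-check the characteristic: since the coefficients appearing ($\pm 1$, and at worst small integers like $2$ from the $2x_{k+1,1}x_{k+1,2k+1}$ term) should be chosen so that the surviving monomial has coefficient a unit — for the monomial I pick, the coefficient will be $\pm 1$, so this causes no trouble in any characteristic $p$. Once the good monomial is exhibited, Fedder's criterion and deformation of $F$-purity for the Gorenstein (complete intersection) ring $R/\mathcal{I}$ finish the proof.
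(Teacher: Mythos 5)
Your proposal is, in outline, exactly the paper's argument: factor $R$ by the partial system of parameters from Theorem~\ref{SOPodd} (keeping the $x$'s and $y$'s distinct), use Fedder's criterion for the complete intersection $\overline{\mathcal{I}}$ to reduce to exhibiting a monomial of $(\prod g_i)^{p-1}$ with every exponent at most $p-1$, obtain an inner monomial $\mu$ from the induction hypothesis applied to the inner $(n-2)\times(n-2)$ block, and show that multiplying $\mu$ by the border-variable monomial forced by the four corner generators $\overline{c}_{11}$, $\overline{c}_{1,2k+1}$, $\overline{c}_{2k+1,1}$, $\overline{c}_{2k+1,2k+1}$ yields a monomial with coefficient $\pm1$. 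Two details you would need to straighten out when writing this up. First, after the \emph{partial} quotient, the surviving border variables are eight, not four: $y_{1,k+1}$, $y_{1,k+2}$, $y_{k+1,2k+1}$, $y_{2k+1,k+1}$ survive alongside the four $x$-border variables you list, and the perturbation terms in the interface generators are mixed monomials such as $x_{k+1,1}y_{1,k+1}$ or $x_{2k+1,k+1}y_{k+1,2k+1}$; the expressions $\overline{c}_{k+1,k+1}^0 + x_{k+2,1}^2$ and $2x_{k+1,1}x_{k+1,2k+1}$ that you quote arise only after imposing the $x$--$y$ identifications (as in the proof of Theorem~\ref{SOPodd}), which you are correctly \emph{not} imposing here, so the coefficient-$2$ worry evaporates. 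Second, the ``uniqueness at the interface'' is enforced not by disjointness of variable supports (the corners and the $Z$-perturbations share exactly the same eight border variables) but by degree saturation: the four corner factors already force each of the eight border variables to exponent exactly $p-1$, so no further border contribution can be drawn from the perturbed inner factors $\overline{c}^{\,0}+Z$, and each inner factor is thereby forced to use its $\overline{c}^{\,0}$-part, feeding $\mu$ with coefficient $1$ by induction. The paper packages this as the observation that all $Z$-entries lie in the border ideal $\mathcal{L}$, which is precisely the bookkeeping device your sketch is reaching for.
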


\begin{proof}

We prove by induction on $k$. Base of the induction is proved in the previous proposition. 

Now let us assume that for all odd integers $n\leq 2k-1$,  after we factor out by part of a system of parameters from Theorem \ref{SOPodd} we have that $R/\overline{\mathcal{I}}$ is $F$-pure, that is, a deformation of $R/\mathcal{I}$ is $F$-pure.  

Factor out $R$ by part of a system of parameters from Theorem \ref{SOPodd}. Then we obtain the following matrices 
\begin{small}\[\overline{X}=\left[\begin{array}{c|ccc|c}
  0& 0&\ldots &0&0 \\ \hline
  0 &&&&0\\
  \vdots &&&& \vdots \\
     0 &&&& 0 \\
x_{k+1,1} &&& & x_{k+1, 2k+1} \\
  x_{k+2,1} &&\raisebox{-5pt}{{\Large \mbox{{$X_0$}}}}&& 0 \\
   0 &&&& 0 \\
  \vdots &&&& \vdots \\
   0 &&&& 0 \\
  \hline
   0&0 \ldots 0&x_{2k+1, k+1}&0 \ldots 0&0\\ \end{array} \right] \]
and 
\[\overline{Y}=\left[\begin{array}{c|ccc|c}
 0&0 \ldots 0&y_{1, k+1} \quad y_{1, k+2}&0 \ldots 0&0\\  
 \hline
    0 &&&&0\\
\vdots &&&& \vdots \\
     0 &&&& 0 \\
0 & &\raisebox{-5pt}{{\Large \mbox{{$Y_0$}}}}&& y_{k+1, 2k+1} \\
   0 &&&& 0 \\
  \vdots &&&& \vdots \\
   0 &&&& 0 \\
  \hline
   0&0 \ldots 0&y_{2k+1, k+1} \quad 0&0 \ldots 0&0\\ \end{array} \right] \]
\end{small}

Therefore, the commutator matrix is
\begin{small}\[\overline{XY-YX}=\]\[\left[\begin{array}{c|c|c}
 -x_{k+1,1}y_{1,k+1}-x_{k+2, 1}y_{1,k+2}&\cdots &-x_{k+1, 2k+1}y_{1, k+1}\\
\hline
    &&\\
\vdots  & \raisebox{-5pt}{{\large \mbox{{$\overline{C}_0+Z$}}}}& \vdots  \\
    &&  \\
  \hline
   -x_{k+1, 1}y_{2k+1, k+1}&\cdots&x_{2k+1, k+1}y_{k+1, 2k+1}-x_{k+1, 2k+1}y_{2k+1, k+1}\\ \end{array} \right] \]
\end{small}
where the entries of the matrix $Z$ are in the ideal  \[\mathcal{L}=(x_{k+1, 2k+1}, x_{k+1, 1}, x_{k+2, 1},  x_{2k+1, k+1}, y_{1, k+1},  y_{2k+1, k+1},  y_{1,k+2}, y_{k+1, 2k+1}).\] 
Now the generators of $\overline{\mathcal{I}}$ are the entries of the main diagonal and anti-diagonal of $\overline{C}_0+Z$ together with the entries at the corners of the above commutator matrix.  Denote by $\Pi$ the product of the $(2k-2)$ entries of the main diagonal and $2k-2$ entries of the anti-diagonal of $\overline{C}_0+Z$. 
Then  \[\overline{\mathcal{I}}^{[p]}:\overline{\mathcal{I}}= \left(x_{k+1, 2k+1}y_{1, k+1}x_{k+1, 1}y_{2k+1, k+1}(x_{k+1,1}y_{1,k+1}-x_{k+2, 1}y_{1,k+2})\right)^{p-1}\cdot\]\[\left((x_{2k+1, k+1}y_{k+1, 2k+1}-x_{k+1, 2k+1}y_{2k+1, k+1})\Pi\right)^{p-1}+\overline{\mathcal{I}}^{[p]}.\]  

By induction hypothesis, if we factor out  by the ideal $\mathcal{L}$, $(\Pi)^{p-1}$ has a monomial term in the entries of $\overline{X}_0$, $\overline{Y}_0$ with non-zero coefficient (equal to 1) modulo $p$ so that every indeterminate that appears in it has degree $p-1$. Therefore, $(\Pi)^{p-1}$ also has such a term in the entries of $\overline{X}_0$ and $\overline{Y}_0$. Denote it by $\mu$. 

We claim that \[\mu (x_{k+1, 1}x_{k+1, 2k+1}x_{k+2, 1} x_{2k+1, k+1}y_{1, k+1} y_{1,k+2}  y_{k+1, 2k+1}y_{2k+1, k+1})^{p-1}\] is a nonzero monomial term of a generator $\overline{\mathcal{I}} ^{[p]}:\overline{\mathcal{I}}$. Clearly, such a term exists. It has a non-zero coefficient since it can be obtained in a unique way from $(x_{k+1, 2k+1}y_{1, k+1}x_{k+1, 1}y_{2k+1, k+1}(x_{k+1,1}y_{1,k+1}-x_{k+2, 1}y_{1,k+2})(x_{2k+1, k+1}y_{k+1, 2k+1}-x_{k+1, 2k+1}y_{2k+1, k+1})\Pi)^{p-1}$.  Therefore, $R/\overline{\mathcal{I}}$ is $F$-pure by Fedder's criterion and hence so is $R/\mathcal{I}$. 
\end{proof}

Next we focus on the case of matrices of even size. 

\begin{lem}\label{binom} Let $p>2$ be any prime number. Then \[\sum_{b=0}^{\frac{p-1}{2}}(-1)^b\sum_{a=b}^{p-1-b}{{a+b}\choose {a}}{{a}\choose{b}}\equiv _{mod \, p}(-1)^{\frac{p-1}{2}}.\]
\end{lem}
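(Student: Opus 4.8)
The plan is to collapse the inner sum over $a$ by the hockey-stick identity after rewriting the summand in a more convenient form. First I would record the elementary identity
\[
\binom{a+b}{a}\binom{a}{b}=\binom{2b}{b}\binom{a+b}{2b},
\]
both sides being equal to $\frac{(a+b)!}{b!\,b!\,(a-b)!}$. Substituting $j=a+b$ (so that $a$ ranging from $b$ to $p-1-b$ corresponds to $j$ ranging from $2b$ to $p-1$, which is exactly why the outer index is capped at $b\le (p-1)/2$), the inner sum becomes a pure hockey-stick sum:
\[
\sum_{a=b}^{p-1-b}\binom{a+b}{2b}=\sum_{j=2b}^{p-1}\binom{j}{2b}=\binom{p}{2b+1}.
\]
Thus the whole double sum equals the single sum $\displaystyle\sum_{b=0}^{(p-1)/2}(-1)^b\binom{2b}{b}\binom{p}{2b+1}$.

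Next I would reduce this expression modulo $p$. For $0<2b+1<p$, that is for $0\le b\le (p-3)/2$, the prime $p$ divides $\binom{p}{2b+1}$: from $j\binom{p}{j}=p\binom{p-1}{j-1}$ and $\gcd(j,p)=1$ we get $p\mid\binom{p}{j}$ for $0<j<p$. Hence every such term vanishes modulo $p$, and the only surviving term is $b=(p-1)/2$, where $2b+1=p$ and $\binom{p}{p}=1$; it contributes $(-1)^{(p-1)/2}\binom{p-1}{(p-1)/2}$. A final application of the elementary congruence $\binom{p-1}{m}\equiv(-1)^m\pmod p$ (immediate from $\binom{p-1}{m}=\prod_{i=1}^{m}\frac{p-i}{i}$) evaluates this term and completes the argument.

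I do not expect a substantial obstacle here: once the summand is written in the form $\binom{2b}{b}\binom{a+b}{2b}$ the argument is essentially forced, the only ingredients being the hockey-stick identity, the divisibility $p\mid\binom{p}{j}$ for $0<j<p$, and the evaluation of $\binom{p-1}{m}$ modulo $p$. The one point that needs care is the bookkeeping of the summation ranges under $j=a+b$, together with the observation that the extreme index $b=(p-1)/2$ must be treated separately since there $\binom{p}{2b+1}=\binom{p}{p}=1$ rather than $0$. An alternative route — exchanging the order of summation and invoking the identity $\sum_{k=0}^{n}(-1)^k\binom{n}{k}\binom{n+k}{k}=(-1)^n$ — handles cleanly only the part of the range with $a\le (p-1)/2$; the complementary range then leaves a truncated alternating binomial sum that is awkward to evaluate, so I would favour the hockey-stick collapse above.
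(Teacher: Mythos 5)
Your route is correct and in fact cleaner than the paper's. Both proofs begin from the rewriting $\binom{a+b}{a}\binom{a}{b}=\binom{2b}{b}\binom{a+b}{2b}$, but the paths then diverge. You collapse the inner sum in one stroke via the hockey-stick identity $\sum_{j=2b}^{p-1}\binom{j}{2b}=\binom{p}{2b+1}$ and then kill every term except the boundary one $b=(p-1)/2$ by the standard divisibility $p\mid\binom{p}{j}$ for $0<j<p$. The paper instead substitutes $d=p-1-a-b$ inside $A_b=\binom{2b}{b}\sum_d\binom{p-1-d}{2b}$, proves a separate congruence $\binom{p-i}{k}\equiv(-1)^{i-1}\binom{p-1-k}{i-1}\binom{p-1}{k}\pmod{p}$ by induction on $i$, and only then invokes $(1-1)^{p-1-2b}=0$. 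Your method reaches the same endpoint with fewer moving parts and only textbook ingredients.

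There is, however, a step you should not wave away with ``evaluates this term and completes the argument.'' On both routes the single surviving contribution is
\[
(-1)^{(p-1)/2}\binom{p-1}{(p-1)/2}\binom{p}{p}=(-1)^{(p-1)/2}\binom{p-1}{(p-1)/2}\equiv(-1)^{(p-1)/2}\cdot(-1)^{(p-1)/2}=1\pmod{p},
\]
and \emph{not} $(-1)^{(p-1)/2}$ as the lemma asserts. A sanity check: for $p=3$ the double sum is $3-2=1$, whereas $(-1)^{1}\equiv2\pmod{3}$; for $p=7$ the sum reduces to $-\binom{6}{3}\equiv1\pmod{7}$, whereas $(-1)^{3}\equiv6\pmod{7}$. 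The paper's own proof has exactly the same slip: it correctly establishes $A_b\equiv0$ for $b<(p-1)/2$ and $A_{(p-1)/2}\equiv(-1)^{(p-1)/2}$, but neglects to multiply by the outer sign $(-1)^{(p-1)/2}$ when assembling the final sum, so its argument also lands at $1$. The right-hand side of the lemma should read $1$. Fortunately this discrepancy is harmless where the lemma is applied (the $n=4$, characteristic $p\neq2$ case of the $F$-purity proof), since all that matters there is that the coefficient be a unit modulo $p$; still, carry the last evaluation through explicitly rather than implicitly inherit the paper's incorrect constant.
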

\begin{proof}
Let $A_b=\sum_{a=b}^{p-1-b}{{a+b}\choose {a}}{{a}\choose{b}}$. We prove the lemma by showing that $A_b\equiv_{mod \, p}0$ for all $0\leq b\leq (p-3)/2$ and $A_{(p-1)/2}\equiv_{mod \, p}(-1)^{(p-1)/2}$.
\[A_ b= \sum_{a=b}^{p-1-b}\frac{(a+b)!}{b!b!(a-b)!}=\]
\[{{2b}\choose{b}}\left(\sum_{a=b+1}^{p-1-b}\frac{(2b+1)(2b+2)\ldots (a+b)}{(a-b)!}+1\right)=\]
\[{{2b}\choose{b}}\sum_{c=0}^{p-1-2b}{{2b+c}\choose {2b}}=\]
\[{{2b}\choose{b}}\sum_{d=0}^{p-1-2b}{{p-1-d}\choose {2b}}.\]

\begin{claim} For any prime number $p$ we have that 
\[{{p-i}\choose {k}}\equiv_{mod \, p}(-1)^{i-1}{{p-1-k}\choose{i-1}}{{p-1}\choose{k}}.\] for all $1\leq i\leq p-1$ and all $0\leq k\leq p-i$. 
\end{claim}

We prove the claim by induction on $i$. 
If $i=1$, the claim is clear. 
We have that  \[{{p-i-1}\choose {k}}={{p-i}\choose {k}}\frac{p-i-k}{p-i}\]
by induction hypothesis modulo $p$ this equals to
\[(-1)^{i-1}{{p-1-k}\choose {i-1}}{{p-1}\choose {k}}\frac{p-i-k}{-i}=(-1)^i{{p-1-k}\choose {i}}{{p-1}\choose {k}}\]

Now we are ready to finish the proof of the lemma. Let $0\leq b<(p-3)/2$
\[A_b={{2b}\choose{b}}\sum_{d=0}^{p-1-2b}{{p-1-d}\choose {2b}}\equiv_{mod \, p}\]
\[{{2b}\choose{b}}{{p-1}\choose{2b}}\sum_{d=0}^{p-1-2b}(-1)^{d+1}{{p-1-2b}\choose {d}}=\]
Using Newton's binomial theorem we obtain
\[-{{2b}\choose{b}}{{p-1}\choose{2b}}(1-1)^{p-1-2b}=0.\]
And 
\[A_{(p-1)/2}={{p-1}\choose{(p-1)/2}}\equiv_{mod \, p}(-1)^{(p-1)/2}. \]

\end{proof}

\begin{prop} Let $n=4$ and let $K$ be a field of positive prime characteristic $p\neq 2$. Then $R/\mathcal{I}$ is $F$-pure. \end{prop}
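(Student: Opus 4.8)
The plan is to mimic exactly the $n=3$ case from the earlier proposition, using Fedder's criterion together with the complete intersection property. First I would factor out $R$ by part of a system of parameters from Theorem \ref{SOPeven} (the characteristic-not-$2$ even case), obtaining specialized matrices $\overline{X}$ and $\overline{Y}$ of size $4$ in a polynomial ring in the remaining $x$'s and $y$'s. The resulting $\overline{\mathcal{I}}$ will be generated by the $n-1=3$ diagonal entries and the $n=4$ anti-diagonal entries of the commutator of $\overline{X}$ and $\overline{Y}$; since $\mathcal{I}$ is a complete intersection (by the theorem just proved), $R/\overline{\mathcal{I}}$ is also a complete intersection, so by Fedder's criterion it suffices to show that $(g_1\cdots g_7)^{p-1} \notin \mathfrak{m}^{[p]}$, where $g_1,\dots,g_7$ are those seven generators. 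Equivalently, I would exhibit a monomial of multidegree $(p-1,\dots,p-1)$ in some subset of the variables that appears in $(g_1\cdots g_7)^{p-1}$ with nonzero coefficient mod $p$.

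The key step is to identify such a monomial and count the number of ways it arises. Unlike the $n=3$ case, here the product $g_1\cdots g_7$ is not squarefree in the relevant variables and there is no obvious unique-expression argument: several generators share variables, and expanding $(g_1\cdots g_7)^{p-1}$ produces a coefficient that is a sum of products of multinomial coefficients. This is precisely why Lemma \ref{binom} was proved: the combinatorial identity $\sum_{b}(-1)^b\sum_a \binom{a+b}{a}\binom{a}{b} \equiv (-1)^{(p-1)/2} \pmod p$ is exactly the kind of alternating sum of products of binomials that shows up when one collects the coefficient of a well-chosen monomial. So I would (i) write down $\overline{C} = \overline{X}\,\overline{Y} - \overline{Y}\,\overline{X}$ explicitly after the specialization, (ii) pick a target monomial — likely one in which the "generic" variables of $\overline{X}_0, \overline{Y}_0$ (the inner $(n-2)\times(n-2)$ block) appear to the full power $p-1$ and the four "corner" pairs of identified variables also appear to power $p-1$ — and (iii) compute the coefficient, organizing the count so it matches the left-hand side of Lemma \ref{binom}, thereby concluding it is $(-1)^{(p-1)/2}\not\equiv 0$.

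The main obstacle I expect is precisely the bookkeeping in step (iii): choosing the monomial so that the coefficient count factors through Lemma \ref{binom} rather than through some messier sum. One has freedom in which anti-diagonal and diagonal entries to use (recall the trace is zero, so the diagonal generators are dependent, and there is latitude in which entries of $C$ one writes), and the specialization from Theorem \ref{SOPeven} was presumably engineered so that some generators become clean monomials (e.g.\ $-x_{k,2k}^2$, $-x_{k,1}^2$, and binomials like $x_{2k,k}^2 - x_{k,1}x_{k,2k}$) while the interaction terms with the inner block $\overline{C}_0$ reduce, via the $n=2$ even base case computation already shown, to something tractable. I would lean on the displayed commutator matrix for $k=2$ in Theorem \ref{SOPeven}'s proof as a guide. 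Once the target monomial is fixed, the remaining work is: isolate the contributions of the two "pure square" generators and the binomial generators, express the multiplicity of the monomial as a double sum over how the cross terms distribute, and recognize that double sum as $\sum_b (-1)^b A_b$ from Lemma \ref{binom}. The hypothesis $p\neq 2$ is essential here — division by $2$ occurs both in the specialization (as in the $k=2$ argument, where $x_{32}^2 = 2x_{31}^2$ was used) and in the $(-1)^{(p-1)/2}$ exponent — which is why this proposition is stated separately from the forthcoming characteristic-$2$ case.
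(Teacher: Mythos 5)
Your overall strategy matches the paper exactly: factor by part of the system of parameters from Theorem~\ref{SOPeven}, use the complete-intersection property to reduce Fedder's criterion to showing $\omega^{p-1}\notin\mathfrak{m}^{[p]}$ where $\omega$ is the product of the seven generators of $\overline{\mathcal{I}}$, and engineer the coefficient count so it collapses to Lemma~\ref{binom}. The target monomial you gesture at is also the right one --- the product of all remaining variables raised to the power $p-1$.

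However, two points in your description are off and worth correcting. First, you expect the specialization to produce ``clean monomial'' generators such as $-x_{k,2k}^2$ and $-x_{k,1}^2$, and you speak of ``corner pairs of identified variables.'' Those squares appear only in the proof of Theorem~\ref{SOPeven} itself, where the \emph{full} system of parameters --- including the identifications $x_{k,s}-y_{2k+1-s,k}$, etc.\ --- has been imposed. For the F-purity argument, by the stated convention one factors out only by \emph{part} of the system, namely the coordinate-killing elements $x_{ij},y_{ji}$ with $(i,j)\in\Omega$, and none of the identifications. Consequently $\overline{Y}$ keeps genuine $y$-variables, and all seven generators of $\overline{\mathcal{I}}$ are bilinear forms with two to four terms; none of them is a monomial. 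This is precisely why the $n=4$ case is harder than $n=3$ (where two generators do specialize to monomials) and why the combinatorial lemma is needed at all. Second, and more substantively, the coefficient extraction you defer is essentially the entire content of the proof: one must expand $\omega^{p-1}$ multinomially, impose the $14$ degree equations $A_{ij}=B_{ij}=p-1$ together with $C_k=p-1$, observe (by adding suitable equations) that the constraints force $\alpha_1=\alpha_2=\alpha_3=\alpha_4=0$, solve the resulting system in terms of two free parameters $\beta_1,\beta_2$, and then simplify the sum of products of multinomial coefficients modulo $p$ --- using identities like $\binom{p-1}{m}\equiv(-1)^m$ --- until it takes the exact shape $\sum_b(-1)^b\sum_a\binom{a+b}{a}\binom{a}{b}$. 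Your prediction that it lands on Lemma~\ref{binom} is correct, and your remark that $p\neq 2$ enters through the $(p-1)/2$ in that lemma is the right reason (the $x_{32}^2=2x_{31}^2$ step you cite belongs to the SOP proof and plays no role here), but without the intermediate reduction the proof is not yet closed.
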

\begin{proof} 
 
After factoring $R$ by part of a system of parameters from Theorem \ref{SOPeven}  we have the following matrices 
\[\overline{X}=\left[\begin{array}{cccc}
0& 0&0&0\\
x_{21}& 0&0&x_{24}\\
x_{31}&x_{32}&0&x_{34}\\
0&x_{42}&x_{43}&0\end{array}\right] \text{ and } \overline{Y}=\left[\begin{array}{cccc}
0& y_{12}&y_{13}&0\\
0& 0&y_{23}&y_{24}\\
0&0&0&y_{34}\\
0&y_{42}&y_{43}&0\\
\end{array}\right]\]
Therefore, by computing the commutator matrix, we have that the product of the generators of $\overline{\mathcal{I}}$ is 
\[\omega=(x_{21}y_{12}+x_{24}y_{42}-x_{32}y_{42}-x_{42}y_{24})\]
\[(x_{21}y_{13}+x_{24}y_{43}-x_{43}y_{24})(x_{31}y_{12}+x_{34}y_{42}-x_{42}y_{34})\]
\[(x_{31}y_{13}+x_{32}y_{23}+x_{34}y_{43}-x_{43}y_{34})(x_{21}y_{12}+x_{31}y_{13})\]
\[(x_{24}y_{12}+x_{34}y_{13})(x_{21}y_{42}+x_{31}y_{43}).\]
and since $\mathcal{I}$ is a complete intersection,
\[\overline{\mathcal{I}}^{[p]}:\overline{\mathcal{I}}=(\omega)^{p-1}+\overline{\mathcal{I}}^{[p]}.\]

To prove $F$-purity of $R/\mathcal{I}$ is sufficient to show that $\omega^{p-1}$ has a non-zero monomial term which does not lie in $\mathfrak{m}^{[p]} $, that is, its indeterminates have degree at most $p-1$.

We claim that $\omega^{p-1}$ has a monomial term which is the product of all the indeterminates  in $\overline{X}$ and $\overline{Y}$ raised to the power $p-1$ with non-zero coefficient modulo $p$. Using Newton's multinomial theorem,  we obtain that each term of $\omega^{p-1}$ has the form
\[{{p-1}\choose {\alpha_1 \quad \beta_1 \quad \gamma_1\quad\delta_1}}(x_{21}y_{12})^{\alpha_1}(x_{24}y_{42})^{\beta_1}(-x_{32}y_{42})^{\gamma_1}(-x_{42}y_{24})^{\delta_1}\] 
\[{{p-1}\choose {\alpha_2 \quad \beta_2 \quad \gamma_2}} (x_{21}y_{13})^{\alpha_2}(x_{24}y_{43})^{\beta_2}(-x_{43}y_{24})^{\gamma_3} \] 
\[{{p-1}\choose {\alpha_3 \quad \beta_3 \quad \gamma_3}}(x_{31}y_{12})^{\alpha_3}(x_{34}y_{42})^{\beta_3}(-x_{42}y_{34})^{\gamma_3}\] 
\[{{p-1}\choose {\alpha_4 \quad \beta_4 \quad \gamma_4\quad\delta_4}}(x_{31}y_{13})^{\alpha_4}(x_{32}y_{23})^{\beta_4}(x_{34}y_{43})^{\gamma_4}(-x_{43}y_{34})^{\delta_4}\] 
\[{{p-1}\choose {\alpha_5 \quad \beta_5}}(x_{21}y_{12})^{\alpha_5}(x_{31}y_{13})^{\beta_5}\] 
\[{{p-1}\choose {\alpha_6 \quad \beta_6}}(x_{24}y_{12})^{\alpha_6}(x_{34}y_{13})^{\beta_6}\] 
\[{{p-1}\choose {\alpha_7 \quad \beta_7}}(x_{21}y_{42})^{\alpha_7}(x_{31}y_{43})^{\beta_7}\] 

Denote by $A_{ij}$ and $B_{ij}$ the degrees of $x_{ij}$ and $y_{ij}$, respectively. Then 
\begin{multicols}{2}

$A_{21}=\alpha_1+\alpha_2+\alpha_5+\alpha_7$

$A_{24}=\alpha_6+\beta_1+\beta_6$

$A_{31}=\alpha_3+\alpha_4+\beta_5+\beta_7$

$A_{32}=\beta_4+\gamma_1$

$A_{34}=\beta_3+\beta_6+\gamma_4$

$A_{42}=\gamma_3+\delta_1$

$A_{43}=\gamma_2+\delta_4$

$B_{12}=\alpha_1+\alpha_3+\alpha_5+\alpha_6$

$B_{13}=\alpha_2+\alpha_4+\beta_5+\beta_6$

$B_{23}=\beta_4+\gamma_1$

$B_{24}=\gamma_2+\delta_1$

$B_{34}=\gamma_3+\delta_4$

$B_{42}=\alpha_7+\beta_1+\beta_3$

$B_{43}=\beta_2+\beta_7+\gamma_4$

\end{multicols}

In addition, let 

$$C_1=\alpha_1+\beta_1+\gamma_1+\delta_1$$
$$C_2=\alpha_2+\beta_2+\gamma_2$$
$$C_3=\alpha_3+\beta_3+\gamma_3$$
$$C_4=\alpha_4+\beta_4+\gamma_4+\delta_4$$
$$C_5=\alpha_5+\beta_5$$
$$C_6=\alpha_6+\beta_6$$
$$C_7=\alpha_7+\beta_7$$

Therefore, we consider a system of linear equations $A_{ij}=p-1$, $B_{ij}=p-1$, $C_k=p-1$ for all the  values of $ i, j, k$.
We look for all solutions in the set of non-negative integers. 
First, observe that the system cannot have a unique solution as the sum of the equations defined by $A_{ij}$ is equal to the sum of the equations defined by $B_{ij}$.  Next, 
\[A_{21}+A_{31}=\alpha_1+\alpha_2+\alpha_5+\alpha_7+\alpha_3+\alpha_4+\beta_5+\beta_7=\alpha_1+\alpha_2+\alpha_3+\alpha_4+C_5+C_7\]
Therefore, $\alpha_1+\alpha_2+\alpha_3+\alpha_4=0$. Since  $\alpha_1,\alpha_2,\alpha_3,\alpha_4$ are non-negative integers, we must have that each term is 0. That is, $\alpha_i=0$ for all $1\leq i \leq 4$. Then using standard techniques from linear algebra, we obtain the following solution to our system of linear equations. 

\[(\alpha_1, \, \beta_1, \, \gamma_1,\,\delta_1)=(0,\, \beta_1, \, p-1-(\beta_1+\beta_2), \, \beta_2)\]
\[(\alpha_2, \, \beta_2, \, \gamma_2)=(0,\,\beta_2,\, p-1-\beta_2)\]
\[(\alpha_3, \, \beta_3, \, \gamma_3)=(0,\,\beta_2, \,p-1-\beta_2)\]
\[(\alpha_4, \, \beta_4, \, \gamma_4,\,\delta_4)=(0,\,\beta_1, \,p-1-(\beta_1+2\beta_2), \,\beta_2)\]
\[(\alpha_5, \, \beta_5)=(\beta_1+\beta_2,\, p-1-(\beta_1+\beta_2))\]
\[(\alpha_6, \, \beta_6)=(p-1-(\beta_1+\beta_2), \,\beta_1+\beta_2)\]
\[(\alpha_7, \, \beta_7)=(p-1-(\beta_1+\beta_2), \, \beta_1+\beta_2)\]

Therefore, the coefficient of our monomial term is 

\[\sum_{\beta_2=0}^{\frac{p-1}{2}}\sum_{\beta_1=0}^{p-1-2\beta_2}(-1)^{\beta_1+\beta_2}{{p-1}\choose {\beta_1\quad\beta_2 \quad p-1-( \beta_1+\beta_2)}}{{p-1}\choose{\beta_2}}^2{{p-1}\choose{\beta_1+\beta_2}}^3\cdot \]\[{{p-1}\choose{\beta_1+\beta_2\quad \beta_2\quad p-1-\beta_1-2\beta_2}}\equiv_{mod \, p}\]
\[\sum_{\beta_2=0}^{\frac{p-1}{2}}\sum_{\beta_1=0}^{p-1-2\beta_2}(-1)^{\beta_1+\beta_2}{{p-1}\choose{\beta_1+\beta_2}}{{p-1}\choose {\beta_1\quad\beta_2 \quad p-1-( \beta_1+\beta_2)}}\cdot\]\[{{p-1}\choose{\beta_1+\beta_2\quad \beta_2\quad p-1-\beta_1-2\beta_2}}\equiv_{mod \, p}\]
\[\sum_{\beta_2=0}^{\frac{p-1}{2}}\sum_{\beta_1=0}^{p-1-2\beta_2}(-1)^{\beta_1+\beta_2}{{p-1}\choose{\beta_1+\beta_2}}^2{{p-1}\choose{\beta_1\quad\beta_2\quad \beta_2\quad p-1-\beta_1-2\beta_2}}\equiv_{mod \, p}\]
\[\sum_{\beta_2=0}^{\frac{p-1}{2}}\sum_{\beta_1=0}^{p-1-2\beta_2}(-1)^{\beta_1+\beta_2}{{p-1}\choose{\beta_1\quad\beta_2\quad \beta_2\quad p-1-\beta_1-2\beta_2}}\equiv_{mod \, p}\]
\[\sum_{\beta_2=0}^{\frac{p-1}{2}}\sum_{\beta_1=0}^{p-1-2\beta_2}(-1)^{\beta_1+\beta_2}{{p-1}\choose {\beta_1+2\beta_2}}{{\beta_1+2\beta_2}\choose{\beta_1\quad\beta_2\quad \beta_2}}\equiv_{mod \, p}\]
\[\sum_{\beta_2=0}^{\frac{p-1}{2}}\sum_{\beta_1=0}^{p-1-2\beta_2}(-1)^{\beta_2}{{\beta_1+2\beta_2}\choose{\beta_1\quad\beta_2\quad \beta_2}}\equiv_{mod \, p}\]

Make a substitution $\beta_1+\beta_2=a$, $\beta_2=b$,  then we have 
\[\sum_{b=0}^{\frac{p-1}{2}}(-1)^b\sum_{a=b}^{p-1-b}{{a+b}\choose {a}}{{a}\choose{b}}\]
which is equal to $(-1)^{\frac{p-1}{2}}$ modulo $p$ due to Lemma \ref{binom}. 

Thus, we have that the coefficient of our monomial term is equal to $\pm 1$ modulo $p$ and $R/\mathcal{I}$ is $F$-pure by Fedder's criterion. \\
\end{proof}

\begin{prop} Let $n=4$ and let $K$ be a field of characteristic 2. Then $R/\mathcal{I}$ is $F$-pure. 
\end{prop}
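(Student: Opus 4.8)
The plan is to mimic the structure of the characteristic $p \neq 2$ case (the previous proposition for $n=4$), but over a field of characteristic $2$. First I would factor $R$ by part of a system of parameters for $R/\mathcal{I}$, this time using Theorem \ref{SOPchar2} rather than Theorem \ref{SOPeven}, obtaining explicit $4\times 4$ matrices $\overline{X}$ and $\overline{Y}$ (with $y$-entries retained where Theorem \ref{SOPchar2} identifies them with $x$-entries). Since $\mathcal{I}$ is a complete intersection, Fedder's criterion reduces $F$-purity of $R/\mathcal{I}$ (equivalently, of the deformation $R/\overline{\mathcal{I}}$) to showing that $\omega^{p-1} = \omega = \omega^{2-1}$, where $\omega$ is the product of the generators of $\overline{\mathcal{I}}$, has a monomial term outside $\mathfrak{m}^{[2]}$ with coefficient $1$; since $p=2$ this means exhibiting a squarefree monomial in the entries of $\overline{X}, \overline{Y}$ appearing in $\omega$ with odd coefficient. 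So the task is genuinely easier than the $p \neq 2$ case: there is no exponent $p-1 > 1$ to expand, and no multinomial-coefficient sum to evaluate via Lemma \ref{binom}.

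Concretely, I would compute the diagonal and anti-diagonal entries of $\overline{C} = \overline{X}\,\overline{Y} - \overline{Y}\,\overline{X}$ (using the $\Omega_2$-reduced matrices), write out the $2n-1 = 7$ generators of $\overline{\mathcal{I}}$, and form their product $\omega$. Then I would identify one squarefree monomial — ideally the product of all the surviving indeterminates of $\overline{X}$ and $\overline{Y}$, as in the $p\neq2$ proof — and argue that it can be formed in exactly one way from $\omega$, so that its coefficient is $1$ in characteristic $2$. The uniqueness argument will be the analogue of the linear-system analysis in the previous proposition: set up the degree equations $A_{ij} = 1$, $B_{ij}=1$ for the exponents of each indeterminate in a putative monomial of $\omega$, and show this system has a unique nonnegative-integer solution (each generator contributes exactly one of its monomials), or if it has several solutions, that an odd number of them yield the target monomial. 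Because the entries after the $\Omega_2$-reduction are genuine products of distinct variables (no squares forced to coincide as in the char-$2$ system-of-parameters computation), I expect the relevant linear system to be even more rigid than before.

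The main obstacle I anticipate is bookkeeping rather than conceptual: getting the $\Omega_2$-reduced matrices exactly right and correctly expanding the commutator, so that the product $\omega$ and the target squarefree monomial are correctly matched. A secondary subtlety is that in characteristic $2$ one must be slightly careful that cancellation does not kill the target term — several generators may share variables (e.g. an entry like $x_{32}^2 + x_{42}x_{43}$ appears in the displayed $k=2$ commutator), and a naive choice of monomial could receive contributions from more than one term of a single generator or repeated squares, forcing a careful count. I would handle this exactly as the paper does elsewhere: argue that the chosen monomial arises in a \emph{unique} way across all generators, hence with coefficient $1$, and invoke Fedder's criterion (\cite{fedder}, Theorem 1.12) to conclude that $R/\overline{\mathcal{I}}$, and therefore $R/\mathcal{I}$, is $F$-pure.
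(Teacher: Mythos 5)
Your plan matches the paper's proof almost exactly: factor by part of the system of parameters from Theorem \ref{SOPchar2}, form the product $\omega$ of the seven generators of $\overline{\mathcal{I}}$, exhibit a squarefree monomial in $\omega$ with odd coefficient, and apply Fedder's criterion together with deformation of $F$-purity for Gorenstein rings. The only difference is methodological: the paper offloads the verification that exactly one squarefree term (with all indeterminates to the first power) appears to Macaulay2, whereas you propose a by-hand uniqueness argument (the degree-equation/linear-system style analysis of the preceding proposition); your hand argument does in fact work here — the generators $x_{34}y_{13}$ and $x_{31}y_{43}$ are monomial, and a short greedy elimination forces a unique choice of term from each factor yielding the monomial $x_{24}x_{31}x_{32}x_{34}x_{41}x_{42}x_{43}\,y_{13}y_{14}y_{23}y_{24}y_{34}y_{42}y_{43}$ — so your version is, if anything, more self-contained.
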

\begin{proof}
First, we factor $R$ by part of a system of parameters from Theorem \ref{SOPchar2}. Then \[\overline{\mathcal{I}}=\left(x_{31}y_{13}+x_{41}y_{14}, x_{32}y_{23}+x_{42}y_{24}+x_{24}y_{42}, x_{31}y_{13}+x_{32}y_{23}+x_{43}y_{34}+x_{34}y_{43},\right.\]\[ \left.x_{34}y_{13}, x_{43}y_{24}+x_{24}y_{43}, x_{42}y_{34}+x_{34}y_{42}, x_{31}y_{43}\right).\] Since $\mathcal{I}$ is a complete intersection, we have $\overline{\mathcal{I}}^{[2]}:\overline{\mathcal{I}}=(\omega)+\overline{\mathcal{I}}^{[2]}$, where $\omega$ is the product of the generators of $\overline{\mathcal{I}}$. Using Macaulay2 \cite{macaulay2}, we compute this product and find that there is exactly one monomial term such that every indeterminates in its support has degree 1: \[x_{24}x_{31}x_{32}x_{34}x_{41}x_{42}x_{43}y_{13}y_{14}y_{23}y_{24}y_{34}y_{42}y_{43}.\]
Thus, by Fedder's criterion, $R/\overline{\mathcal{I}}$ is $F$-pure.\\
\end{proof}

\begin{thm} Let $n=2k$ be an even integer with $k\geq 2$ and let $K$ be a field of positive prime characteristic $p$. Then $R/\mathcal{I}$ is $F$-pure.  \end{thm}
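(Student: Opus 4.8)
The plan is to follow the template of the odd-size theorem: induct on $k$ with step $2$, using the two preceding propositions (the case $n=4$ with $p\neq 2$ and the case $n=4$ with $p=2$) as base cases, and invoking that $R/\mathcal{I}$ is a complete intersection, hence Gorenstein, so that by deformation of $F$-purity for Gorenstein rings (\cite{fedder}, Theorem~3.4) it suffices to prove that a single deformation of $R/\mathcal{I}$ is $F$-pure. Concretely, for $\operatorname{char} K\neq 2$ I would factor $R$ by the ``$x$--$y$-identifying'' part of the system of parameters from Theorem~\ref{SOPeven} (and by the analogous part of Theorem~\ref{SOPchar2} when $\operatorname{char} K = 2$), obtaining matrices $\overline{X}$, $\overline{Y}$ in which the central $(2k-2)\times(2k-2)$ blocks $\overline{X}_0$, $\overline{Y}_0$ appear exactly as in the displays in the proof of Theorem~\ref{SOPeven}, framed by variables supported in the first and last rows and columns together with the $k$-th and $(k+1)$-st rows and columns.

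Next I would write out $\overline{C}=\overline{X}\,\overline{Y}-\overline{Y}\,\overline{X}$ and observe, as in the odd case, that $\overline{C}=\overline{C}_0+Z$ with every entry of $Z$ lying in the ideal $\mathcal{L}$ generated by the frame variables; the generators of $\overline{\mathcal{I}}$ are the $2k-2$ diagonal and $2k-1$ anti-diagonal entries, of which the ``interior'' ones reduce modulo $\mathcal{L}$ to the corresponding generators of $\mathcal{I}_0$, while the ``boundary'' ones (the corners $\overline{c}_{11},\overline{c}_{1,2k},\overline{c}_{2k,1},\overline{c}_{2k,2k}$ and the near-central $\overline{c}_{kk},\overline{c}_{k+1,k+1},\overline{c}_{k,k+1},\overline{c}_{k+1,k}$) are explicit low-degree polynomials in the frame variables plus a generator of $\mathcal{I}_0$. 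Since $\mathcal{I}$ is a complete intersection, $\overline{\mathcal{I}}^{[p]}:\overline{\mathcal{I}}=(\omega^{p-1})+\overline{\mathcal{I}}^{[p]}$ with $\omega$ the product of all these generators, so by Fedder's criterion it is enough to exhibit one monomial of $\omega^{p-1}$ in which every indeterminate occurs to degree at most $p-1$ and whose coefficient is nonzero modulo $p$.

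For that monomial I would take $\mu\cdot\nu^{p-1}$, where $\nu$ is the product of all frame variables and $\mu$ is the monomial in the entries of $\overline{X}_0$, $\overline{Y}_0$ furnished by the induction hypothesis (each of its indeterminates to degree $p-1$, coefficient $\pm 1$ mod $p$). The interior generators contribute $\mu$ to the $(p-1)$-st power exactly as in the induction hypothesis; the substance of the argument is that the boundary generators contribute $\nu^{p-1}$ with a nonzero coefficient. Here the even case is genuinely harder than the odd one: the four near-central generators are sums of several monomials, and — just as in the $n=4$, $p\neq 2$ proposition — collecting the coefficient of $\mu\,\nu^{p-1}$ reduces (after solving the linear system recording the degrees of the $k$-th and $(k+1)$-st row/column variables) to a nontrivial binomial sum of the type evaluated by Lemma~\ref{binom}, whence it equals $\pm\,(\text{coefficient of }\mu)\equiv\pm 1$ mod $p$. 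So I expect the main obstacle to be precisely this bookkeeping: setting up and solving the degree equations for how the near-central and corner variables distribute among $\overline{c}_{11},\overline{c}_{1,2k},\overline{c}_{2k,1},\overline{c}_{2k,2k},\overline{c}_{kk},\overline{c}_{k+1,k+1},\overline{c}_{k,k+1},\overline{c}_{k+1,k}$, and recognizing the resulting alternating binomial sum as the one handled by Lemma~\ref{binom}. Once that reduction is done, nonvanishing mod $p$ follows, Fedder's criterion gives $F$-purity of the deformation $R/\overline{\mathcal{I}}$, and deformation for Gorenstein rings gives $F$-purity of $R/\mathcal{I}$. The characteristic-$2$ case proceeds identically, with Lemma~\ref{binom} replaced by a direct count showing that the relevant monomial of $\omega^{p-1}$ arises with odd multiplicity (uniquely, in the $n=4$ check), as in the $n=4$, $p=2$ proposition.
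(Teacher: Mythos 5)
Your framing of the inductive step diverges from the paper's and contains a genuine gap. You sort the near-central generators $\overline{c}_{kk}, \overline{c}_{k+1,k+1}, \overline{c}_{k,k+1}, \overline{c}_{k+1,k}$ into the ``boundary'' group alongside the four corners, and then expect to have to re-run a degree-equation/binomial-sum analysis (à la Lemma~\ref{binom}) to show they contribute $\nu^{p-1}$. But the $k$th and $(k+1)$st rows and columns are \emph{interior}: after factoring by the relevant part of the system of parameters, the only variables in the frame of $\overline{X}, \overline{Y}$ are the eight entries of $\mathcal{L}=(x_{k1}, x_{k+1,1}, x_{k,2k}, x_{2k,k}, y_{1k}, y_{1,k+1}, y_{k,2k}, y_{2k,k})$ sitting in the first and last row/column, and the near-central generators are entries of the inner block $\overline{C}_0+Z$. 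Once you remove them from the interior product, the monomial $\mu$ you invoke --- which the induction hypothesis produces from the $(p-1)$st power of the product of \emph{all} $2n_0-1$ generators of $\overline{\mathcal{I}}_0$, the near-central ones included --- no longer matches your truncated interior product, so the induction hypothesis does not give you $\mu$ from ``the interior generators.'' Worse, each near-central generator carries a $\overline{c}^{\,0}$-summand, so a purely frame-variable bookkeeping of them is not even well-posed without entangling them back with the interior.

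The paper's inductive step is much shorter and avoids Lemma~\ref{binom} entirely. Since $\overline{c}_{1,2k}^{\,p-1}=(-x_{k,2k}y_{1k})^{p-1}$ and $\overline{c}_{2k,1}^{\,p-1}=(-x_{k1}y_{2k,k})^{p-1}$ are pure monomials, they force $y_{1k}, y_{2k,k}, x_{k,2k}, x_{k1}$ to exact degree $p-1$; this rules out the competing terms in $\overline{c}_{11}^{\,p-1}$ and $\overline{c}_{2k,2k}^{\,p-1}$, so the four corners contribute $\nu^{p-1}$ uniquely with unit coefficient, and now all eight frame variables are saturated at degree $p-1$. Consequently the remaining $2k-3$ diagonal and $2k-2$ anti-diagonal factors from $\overline{C}_0+Z$ can take nothing from $Z$ (every entry of $Z$ lies in $\mathcal{L}$), and their $(p-1)$st power contributes $\mu$ with the coefficient furnished by the induction hypothesis on $\overline{\mathcal{I}}_0$. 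The multinomial linear system and Lemma~\ref{binom} (and, in characteristic $2$, the direct computation) are needed only in the $n=4$ base cases, where there is no interior block to induct on; contrary to your expectation, they do not reappear in the inductive step, and that is precisely what makes the even case no harder than the odd one once the base is in hand.
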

\begin{proof}
We prove by induction on $k$. Base of the induction was proved in the previous two propositions. 
We assume that for all $n\leq 2k-2$ after we factor by part of a system of parameters from Theorem \ref{SOPeven} and Theorem \ref{SOPchar2} we have that $R/\overline{\mathcal{I}}$ is $F$-pure and if $\omega$ is the product of the generators of $\overline{\mathcal{I}}$, then $\omega^{p-1}$ has a monomial term $\mu \notin \mathfrak{m}^{[p]}$. 
 Then we have that

\begin{small}\[
\overline{X}=
\left[\begin{array}{c|ccc|c}
  0& 0&\ldots& 0&0 \\ \hline
  0 &&&&0\\
  \vdots &&&& \vdots \\
     0 &&&& 0 \\
x_{k1} & &&& x_{k, 2k} \\
  x_{k+1,1} &&\raisebox{-5pt}{{\Large \mbox{{$X_0$}}}}&& 0 \\
   0 &&&& 0 \\
  \vdots &&&& \vdots \\
   0 &&&& 0 \\
  \hline
   0&\, \, 0 \ldots &0 \, x_{2k, k}\, 0& \ldots \, \, 0&0\\ \end{array} \right]\]
and 
\[\overline{Y}=\left[\begin{array}{c|ccc|c}
 0&0 \ldots 0&y_{1k}\quad y_{1, k+1}&0 \ldots 0&0\\  
 \hline
    0 &&&&0\\
\vdots &&&& \vdots \\
     0 &&&& 0 \\
0 & &\raisebox{-5pt}{{\Large \mbox{{$Y_0$}}}}&& y_{k, 2k} \\
   0 &&&& 0 \\
   0 &&&& 0 \\
  \vdots &&&& \vdots \\
   0 &&&& 0 \\
  \hline
   0&0 \ldots 0\,&y_{2k, k}\quad  0&0 \ldots 0&0\\ \end{array} \right] \]
\end{small}

Therefore, the commutator matrix is
\begin{small}\[\overline{XY-YX}=\]\[\left[\begin{array}{c|c|c}
 -x_{k1}y_{1k}-x_{k+1, 1}y_{1, k+1}&\cdots &-x_{k, 2k}y_{1k}\\
\hline
    &&\\
\vdots  & \raisebox{-5pt}{{\large \mbox{{$\overline{C}_0+Z$}}}}& \vdots  \\
    &&  \\
  \hline
   -x_{k1}y_{2k, k}&\cdots&x_{2k, k}y_{k, 2k}-x_{k, 2k}y_{2k, k} \\ \end{array} \right] \]
\end{small}
where the entries of the matrix $Z$ are in the ideal  \begin{align*}
 &\mathcal{L}=(x_{k1}, x_{k+1, 1}, x_{k, 2k}, x_{2k, k}, y_{1k},y_{1, k+1}, y_{k, 2k}, y_{2k, k}).   
\end{align*} 

By induction hypothesis, the $(p-1)$st power of the product of any $2k-3$ entries on the main diagonal and all the entries of the main antidiagonal of $\overline{C}_0+Z$ has a non-zero monomial term $\mu$ in the entries of $\overline{X}_0, \overline{Y}_0$ which is not in $\mathfrak{m}^{[p]}$, (it exists after we factor out by the ideal $\mathcal{L}$). Therefore, the $(p-1)$st power of the generators of $\overline{\mathcal{I}}$ has a non-zero monomial term 
$$\mu(x_{k1}x_{k+1, 1}x_{k, 2k}x_{2k, k} y_{1k}y_{1, k+1} y_{k, 2k}y_{2k, k})^{p-1}\notin \mathfrak{m}^{[p]}.$$ 
\end{proof}
\section{Zero diagonal commutator}

In this section we study the algebraic set defined by the vanishing of the diagonal of the commutator matrix $C$.
Recall that $I$ is the ideal of $R$ generated by the entries of the main diagonal of $C$. Since the trace of $C$ is 0, we can choose the generators of $I$ to be the first $n-1$ entries of the main diagonal counting from the upper left corner. 

In his thesis \cite{young},H-W.Young has proved that $I$ is a complete intersection. We recover his result from our proof that $\mathcal{I}$ is generated by a regular sequence. 
\begin{thm}
\cite{young} (Theorem 5.3.1) $R/I$ is a complete intersection ring of dimension $2n^2-n+1$.
\end{thm}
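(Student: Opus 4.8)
The plan is to bootstrap this statement from the fact, proved above, that $\mathcal{I}$ is generated by a regular sequence, using the explicit systems of parameters for $R/\mathcal{I}$ from Theorems \ref{SOPodd}, \ref{SOPeven} and \ref{SOPchar2}. The point to exploit is that in each of those theorems the chosen generating set of $\mathcal{I}$ is the disjoint union of a generating set of $I$ (the $n-1$ selected entries of the main diagonal of $C$) and a generating set of $J$ (the entries of the anti-diagonal of $C$, of which there are $n-1$ when $n$ is odd and $n$ when $n$ is even).

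First I would fix a homogeneous system of parameters $\theta_1,\dots,\theta_m$ for $R/\mathcal{I}$ coming from one of Theorems \ref{SOPodd}, \ref{SOPeven}, \ref{SOPchar2}, where $m=2n^2-2n+2$ for $n$ odd and $m=2n^2-2n+1$ for $n$ even. Writing the generators of $\mathcal{I}$ as $f_1,\dots,f_{n-1}$ (generating $I$) followed by $g_1,\dots,g_t$ (generating $J$), with $t=n-1$ for $n$ odd and $t=n$ for $n$ even, consider the ordered list
$$f_1,\dots,f_{n-1},\ g_1,\dots,g_t,\ \theta_1,\dots,\theta_m.$$
These are homogeneous elements of $\mathfrak{m}$; their number is $(n-1)+t+m=2n^2=\dim R$; and they generate an $\mathfrak{m}$-primary ideal, since the list contains all generators of $\mathcal{I}$ together with a system of parameters for $R/\mathcal{I}$. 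Hence the list is a homogeneous system of parameters for $R$.

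Since $R=K[X,Y]$ is a polynomial ring, it is Cohen--Macaulay, so this system of parameters is a regular sequence in the displayed order. In particular its initial segment $f_1,\dots,f_{n-1}$ is a regular sequence, and these elements generate $I$. Therefore $I$ is a complete intersection ideal, $R/I$ is a complete intersection, and $\dim R/I = 2n^2-(n-1) = 2n^2-n+1$, as claimed.

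There is no real obstacle here — the content is entirely in Theorems \ref{SOPodd}, \ref{SOPeven} and \ref{SOPchar2}. The only step requiring care is the bookkeeping in the second paragraph: one must check that the particular $n-1$ diagonal generators of $I$ genuinely occur among the $2n^2$ elements forming the system of parameters for $R$ (so that they may be taken as the leading part of the associated regular sequence), and that the total count is exactly $2n^2$. Both are immediate from the shape of the generating set of $\mathcal{I}$ used in those theorems, but they are precisely the reason the argument goes through.
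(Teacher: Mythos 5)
Your proof is correct and follows exactly the route the paper signals (``we recover his result from our proof that $\mathcal{I}$ is generated by a regular sequence''): use the system of parameters of Theorems \ref{SOPodd}, \ref{SOPeven}, \ref{SOPchar2} together with the generators of $\mathcal{I}$ to produce a system of parameters of $R$, invoke Cohen--Macaulayness of the polynomial ring to make it a regular sequence, and read off that the $n-1$ diagonal generators of $I$, being an initial segment, form a regular sequence of length $n-1$. The paper leaves these details implicit; your write-up supplies them faithfully.
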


Moreover, Young showed the following properties of $I$.
\begin{thm}
\begin{itemize}
    \item[1.] \cite{young} (Theorem 5.3.2) $R/I$ is reduced when the characteristic of the field $K$ is 0.
    \item[2.] \cite{young} (Theorem 5.3.3) $R/I$ is irreducible when $n=2$ and $n=3$ in all characteristics.Therefore, $R/I$ is a domain in characteristic 0 when $n=2$ and $n=3$.
\end{itemize}
\end{thm}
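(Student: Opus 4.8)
The plan is to recover both statements from the positive-characteristic results already proved, by reduction to characteristic $p$. Work over $\mathbb{Z}$: put $R_{\mathbb{Z}}=\mathbb{Z}[X,Y]$ and let $I_{\mathbb{Z}}\subseteq R_{\mathbb{Z}}$ be the ideal generated by the first $n-1$ diagonal entries $c_{11},\dots,c_{n-1,n-1}$ of $XY-YX$; these have integer coefficients, and $R/I=R_K/I_K=R_{\mathbb{Z}}/I_{\mathbb{Z}}\otimes_{\mathbb{Z}}K$ for every field $K$. First I would record that $R_{\mathbb{Z}}/I_{\mathbb{Z}}$ is flat over $\mathbb{Z}$: by Theorem 1.1(1), over every field $K$ the elements $c_{11},\dots,c_{n-1,n-1}$ form a regular sequence, so $R_K/I_K$ is a complete intersection with Hilbert series $(1-t^2)^{\,n-1}/(1-t)^{\,2n^2}$, independent of $K$; hence each graded piece $(R_{\mathbb{Z}}/I_{\mathbb{Z}})_d$ is a finitely generated $\mathbb{Z}$-module whose rank equals the dimension of its reduction modulo every prime, so it is $\mathbb{Z}$-free, and $R_{\mathbb{Z}}/I_{\mathbb{Z}}$ is $\mathbb{Z}$-flat. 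Thus $\operatorname{Spec}(R_{\mathbb{Z}}/I_{\mathbb{Z}})\to\operatorname{Spec}\mathbb{Z}$ is a flat morphism of finite type.

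For part 1, apply Theorem 1.1(3) over the field $\overline{\mathbb{F}_p}$: for every prime $p$ the ring $R_{\overline{\mathbb{F}_p}}/I_{\overline{\mathbb{F}_p}}$ is $F$-regular, hence a normal domain, so the fiber of $\operatorname{Spec}(R_{\mathbb{Z}}/I_{\mathbb{Z}})\to\operatorname{Spec}\mathbb{Z}$ over $(p)$ is geometrically reduced (indeed geometrically integral). By standard fiberwise semicontinuity for a flat finite-type morphism, the set of $s\in\operatorname{Spec}\mathbb{Z}$ over which the fiber is geometrically reduced is open; since it contains every closed point, it contains the generic point, so $R_{\mathbb{Q}}/I_{\mathbb{Q}}$ is geometrically reduced. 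Base change along $\mathbb{Q}\hookrightarrow K$ then shows that $R/I=R_K/I_K$ is reduced for every field $K$ of characteristic $0$.

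For part 2, I would run the same family argument with ``geometrically reduced'' replaced by ``geometrically irreducible'': each fiber over $(p)$ is geometrically integral, and the geometrically-irreducible locus in $\operatorname{Spec}\mathbb{Z}$ is constructible and contains every closed point, hence the generic point, so $R_{\mathbb{Q}}/I_{\mathbb{Q}}$, and therefore $R_K/I_K$ for any field $K$ of characteristic $0$, is geometrically irreducible; together with part 1 this gives that $R/I$ is an integral domain in characteristic $0$. Note that this argument works for all $n$, so the restriction $n\le 3$ in the cited statement reflects only the tools available to the original author; for $n\le 3$ one can also argue by hand, since for $n=2$ the ideal $I=(x_{12}y_{21}-x_{21}y_{12})$ cuts out a single irreducible quadric, and for $n=3$ one may factor out part of a system of parameters as in Section 2 and check irreducibility of the small resulting variety directly, e.g.\ with \cite{macaulay2}.

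The main obstacle is the careful bookkeeping in the transfer from characteristic $p$ to characteristic $0$: one must confirm that $R_{\mathbb{Z}}/I_{\mathbb{Z}}$ is genuinely $\mathbb{Z}$-flat, so that the semicontinuity statements for fibers of a finite-type flat morphism apply, and one must work throughout with the \emph{geometric} forms of ``reduced'' and ``irreducible'', so that the closing base change to an arbitrary field of characteristic $0$---not merely to $\mathbb{Q}$---is legitimate; this is precisely why it is convenient to invoke Theorem 1.1(3) already over $\overline{\mathbb{F}_p}$. Beyond that, the proof is a routine combination of fiberwise semicontinuity with the remark that a constructible subset of $\operatorname{Spec}\mathbb{Z}$ containing all closed points must contain the generic point.
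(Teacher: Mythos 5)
The paper does not prove this theorem; it is cited verbatim from Young's thesis, so there is no in-paper proof to compare against, and your reduction-to-positive-characteristic argument is a genuinely different route. Your steps are sound: (i) the complete-intersection theorem gives a Hilbert series $(1-t^2)^{n-1}/(1-t)^{2n^2}$ that is independent of the field, so each graded piece of $R_{\mathbb{Z}}/I_{\mathbb{Z}}$ is a finitely generated $\mathbb{Z}$-module of constant residual rank and hence free, making $R_{\mathbb{Z}}/I_{\mathbb{Z}}$ flat over $\mathbb{Z}$; (ii) the paper's $F$-regularity theorem applied over $\overline{\mathbb{F}_p}$ gives that each closed fiber is geometrically integral; (iii) constructibility of the geometrically-reduced and geometrically-integral loci in $\operatorname{Spec}\mathbb{Z}$ (EGA IV, 9.7.7; finite presentation suffices and no flatness is needed for this step) together with the elementary fact that a constructible subset of $\operatorname{Spec}\mathbb{Z}$ containing all closed points must contain the generic point pushes these properties to the generic fiber, and geometric reducedness/integrality over $\mathbb{Q}$ passes to any characteristic-zero field. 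One small imprecision: in part~1 you assert that the geometrically-reduced-fiber locus is \emph{open}; for a non-proper morphism only constructibility is guaranteed, but as you note in part~2 that is all the argument needs. It is worth emphasizing that your argument establishes strictly more than Young's statement: it shows $R/I$ is a domain in characteristic $0$ for every $n$ (with $n=2$ handled directly via the irreducibility of the $2\times2$ minor, and $n\geq 3$ via the $F$-regularity result). Combined with the paper's Corollary that $R/I$ is a domain in positive characteristic, this would resolve the case of $I$ in Conjecture~6.2, which the paper leaves open; you may want to flag this explicitly, since it elevates your observation from a re-derivation of a cited result to a new consequence of the paper's main theorems.
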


Next we find a homogeneous system of parameters for $R/I$ and show that $R/I$ is $F$-regular when $K$ is a field of positive prime characteristic $p$. 

\subsection{A system of parameters}

We already have one system of parameters on $R/I$ which is obtained from the one we got for $R/\mathcal{I}$. However, the one we offer here is simpler and will allow us to prove $F$-regularity of $R/I$.

\begin{thm}\label{SOP-I} A set of elements 
\[ x_{11}\]
\[x_{ij}, \text{ where } 2\leq i\leq n, \, 1\leq j\leq n,\]
\[y_{ij}, \text{ where } 1\leq i\leq n, \, 2\leq j\leq n,\]
\[x_{1j}-y_{j1}, \text{ where } 1\leq j\leq n,\]
is a homogeneous system of parameters and a regular sequence on $R/I$.
\end{thm}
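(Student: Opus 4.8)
The plan is to verify directly that the listed elements form a homogeneous system of parameters for $R/I$, and then to get the regular-sequence claim for free from Cohen--Macaulayness. First I would count: the list contains $1+n(n-1)+n(n-1)+n=2n^2-n+1$ elements, which is exactly $\dim R/I$ by the complete intersection theorem of Young quoted above. So it suffices to show that $I$ together with the listed elements generates an $\mathfrak{m}$-primary ideal of $R$ — equivalently, that the images of the listed elements generate an $\mathfrak{m}$-primary ideal of $R/I$ — i.e.\ that, adjoined to the $n-1$ generators of $I$, they form a full homogeneous system of parameters of $R$ of length $2n^2$. Once this is established, the regular sequence statement is automatic: $R/I$ is a complete intersection, hence Cohen--Macaulay, and on a Cohen--Macaulay local (or graded) ring every system of parameters is a regular sequence.

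To carry out the $\mathfrak{m}$-primary verification I would factor $R$ by the listed elements and read off the reduced commutator, exactly as in Section~2. Killing $x_{11}$ and all $x_{ij}$ with $i\geq 2$ leaves $\overline{X}$ supported only in positions $(1,2),\dots,(1,n)$; killing all $y_{ij}$ with $j\geq 2$ leaves $\overline{Y}$ supported only in positions $(2,1),\dots,(n,1)$; and the relations $x_{1j}-y_{j1}$ identify $\overline{x}_{1j}=\overline{y}_{j1}$ for all $j$ (the case $j=1$ also forcing $\overline{y}_{11}=0$). Writing $z_j:=\overline{x}_{1j}=\overline{y}_{j1}$ for $2\leq j\leq n$, so that $\overline{R}=K[z_2,\dots,z_n]$, a direct computation of $\overline{X}\,\overline{Y}-\overline{Y}\,\overline{X}$ gives
\[\overline{c}_{11}=\sum_{j=2}^{n}z_j^2,\qquad \overline{c}_{ii}=-z_i^2\ \ (2\leq i\leq n),\]
with the off-diagonal entries equal to $-z_iz_j$ or $0$ and irrelevant here. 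Since the trace is $0$, the relation $\overline{c}_{11}$ is redundant, so $\overline{I}=(z_2^2,\dots,z_n^2)$ and $\text{Rad}(\overline{I})=(z_2,\dots,z_n)$ is the homogeneous maximal ideal of $K[z_2,\dots,z_n]$. Hence $I$ together with the listed elements is $\mathfrak{m}$-primary in $R$, as required.

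I do not anticipate a real obstacle: this is the same ``factor out and inspect the reduced commutator'' strategy as in Theorems~\ref{SOPodd}--\ref{SOPchar2}, but much easier, since almost every variable is killed outright and what remains is a sum/difference of squares. The only points needing care are (i) checking that the identifications $\overline{x}_{1j}=\overline{y}_{j1}$ do not leave extra surviving variables or produce spurious diagonal cross-terms in $\overline{C}$, and (ii) confirming the elementary count $2n^2-n+1=\dim R/I$, so that the system-of-parameters conclusion, and therefore via Cohen--Macaulayness the regular-sequence conclusion, is legitimate.
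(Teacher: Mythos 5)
Your proposal is correct and follows the same route as the paper: count that the list has $2n^2-n+1$ elements, factor out by them, and observe that the image of $I$ becomes $(z_2^2,\dots,z_n^2)$ in $K[z_2,\dots,z_n]$, which is $\mathfrak{m}$-primary, with the regular-sequence conclusion then coming for free from Cohen--Macaulayness of the complete intersection $R/I$. The only difference is that you spell out the reduced-commutator computation and the Cohen--Macaulay step explicitly, which the paper leaves implicit.
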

\begin{proof}
The dimension of $R/I$ is $2n^2-n+1$ and the number of elements in the above set is also $2n^2-n+1$. It is necessary and sufficient to prove that if we factor out by the ideal they generate together with $I$, the resulting ring has Krull dimension 0. We obtain that this factor ring is isomorphic to the following ring
\[\frac{K[x_{12}, x_{13}, \ldots, x_{1n}]}{(x_{12}^2, x_{13}^2, \ldots, x_{1n}^2)},\]
which is clearly 0-dimensional.
\end{proof}

\subsection{F-regularity}

\begin{thm}
Let $K$ be a field of positive prime characteristic $p$. Then $R/I$ is $F$-regular. 
\end{thm}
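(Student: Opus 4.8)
The plan is to prove $F$-regularity of $R/I$ by exploiting the very simple system of parameters found in Theorem~\ref{SOP-I} together with the fact that $R/I$ is a complete intersection, hence Gorenstein. The key structural observation is that $F$-regularity deforms for Gorenstein rings: if we factor out the regular sequence consisting of the elements $x_{1j}-y_{j1}$ for $2\leq j\leq n$ (these are part of the regular sequence from Theorem~\ref{SOP-I}, but \emph{not} the variables we kill entirely), the resulting ring should again be a complete intersection, and it suffices to show \emph{that} ring is strongly $F$-regular. So the first step is to choose a good partial regular sequence to quotient by, reducing to a smaller and more symmetric ring while preserving the Gorenstein property so that the deformation theorem for strong $F$-regularity applies.

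Once we are in the reduced ring, the plan is to establish strong $F$-regularity directly. First I would check $F$-purity, which in fact already follows from the structure: after the reduction the defining ideal is still a complete intersection, so by Fedder's criterion one needs a single monomial of the right shape in the $(p-1)$st power of the product of the generators, and Young's result that $R/I$ is a domain in small cases, plus our $F$-purity machinery from Section~3, suggests the relevant monomial exists. To upgrade $F$-purity to strong $F$-regularity for a Gorenstein ring, the standard tool is: a Gorenstein $F$-pure ring is $F$-regular if and only if it is $F$-rational, i.e. if and only if it is normal, Cohen--Macaulay (automatic here, being a complete intersection), and $F$-injective away from an open set of small codimension; equivalently one localizes at each height-one prime and at the singular locus. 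Concretely, I would show that (a) $R/I$ satisfies Serre's condition $R_1$, so that it is normal (it is already $S_2$ being Cohen--Macaulay), hence a normal domain, and (b) the singular locus has codimension large enough, or directly that $R/I$ localized at the defining prime of the singular locus is regular or $F$-regular, so that $F$-rationality propagates.

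The cleanest route, and the one I would try first, is to find an explicit element $c\in R/I$ not in any minimal prime such that $(R/I)_c$ is regular (a smooth locus witness), and then to show $c$ has a power lying in the "test ideal" by exhibiting, for each $e$, a splitting of $R/I\hookrightarrow (R/I)^{1/p^e}$ sending $c^{1/p^e}$ to $1$; this is exactly the strong $F$-regularity criterion of Hochster--Huneke. Because the defining equations $c_{11},\dots,c_{n-1,n-1}$ are quite concrete, one can identify the Jacobian ideal and pick $c$ to be a suitable maximal minor of the Jacobian matrix of the $c_{ii}$'s; Young's thesis already computes enough about the singular locus of $R/I$ in characteristic zero, and in our situation Theorem~\ref{SOP-I} shows $R/I$ modulo a linear-ish system of parameters becomes $K[x_{12},\dots,x_{1n}]/(x_{12}^2,\dots,x_{1n}^2)$, which pinpoints where non-smoothness comes from.

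The main obstacle I anticipate is controlling the singular locus of $R/I$ precisely enough: strong $F$-regularity is a local condition, and the argument needs either that $\mathrm{Sing}(R/I)$ has codimension $\geq 2$ (giving normality for free and letting one induct/localize) or an explicit test element together with an explicit splitting, and producing such a splitting for all $e$ typically requires a Fedder-type monomial computation uniform in $e$ — exactly the kind of combinatorial argument (as in Lemma~\ref{binom}) that can get delicate. I would expect the proof to run: reduce modulo part of the regular sequence to a symmetric complete intersection; verify $R_1$ hence normality; show $F$-purity by a Fedder monomial count; then invoke that a normal Gorenstein $F$-pure complete intersection which is $F$-rational is $F$-regular, checking $F$-rationality by a localization/colength argument at the singular prime — with the combinatorial monomial bookkeeping being the part most likely to require real work.
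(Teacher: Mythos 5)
Your high-level framework is right — use that $F$-regularity deforms for Gorenstein rings (the paper cites \cite{smooth}, Cor.~4.7), and factor $R/I$ by part of the regular sequence from Theorem~\ref{SOP-I} to reduce to a simpler ring. But you stop there and propose a long, uncompleted program (verify $R_1$, hence normality, then $F$-purity via Fedder, then $F$-rationality via localization, then $F$-regularity), whereas the paper's actual proof is a two-line structural observation that you miss entirely. After killing $x_{11}$, $y_{11}$, and every $x_{ij}$, $y_{ij}$ with $i,j\geq 2$ (a subset of the regular sequence of Theorem~\ref{SOP-I} together with one identification), the only surviving generator of $I$ in the quotient $\overline R$ is, for each $i\geq 2$, the binomial $x_{1i}y_{i1}-x_{i1}y_{1i}=\det Z_i$ where
\[
Z_i=\begin{bmatrix} x_{i1} & x_{1i}\\ y_{i1} & y_{1i}\end{bmatrix},
\]
so that
\[
\overline R\;\simeq\;\bigotimes_{i=2}^{n}\frac{K[Z_i]}{(\det Z_i)}.
\]
Each factor is a generic $2\times 2$ determinantal hypersurface, known to be $F$-regular (\cite{det}, Thm.~7.14), and a tensor product of $F$-regular $K$-algebras of this form is $F$-regular (\cite{smooth}, Thm.~7.45); deforming back gives $F$-regularity of $R/I$. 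This completely bypasses Fedder-type monomial bookkeeping, normality of $R/I$, and any analysis of the singular locus.

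Two concrete slips in your sketch are worth flagging. First, the ring $K[x_{12},\ldots,x_{1n}]/(x_{12}^2,\ldots,x_{1n}^2)$ that Theorem~\ref{SOP-I} produces is the quotient by the \emph{entire} system of parameters; it is Artinian and non-reduced, hence useless as a deformation target — you must factor by strictly fewer elements, and the right choice is exactly the one that leaves the $Z_i$'s independent. Second, relying on Young's result that $R/I$ is a domain ``in small cases'' is not available as a tool here: Young proves irreducibility only for $n=2,3$, and the paper derives primality of $I$ in positive characteristic as a \emph{corollary} of $F$-regularity, so using it as an input would be circular. Your proposed chain (F-pure $\Rightarrow$ normal $\Rightarrow$ F-rational $\Rightarrow$ F-regular for Gorenstein) is logically sound but substantially harder than the route the paper takes, and none of its intermediate claims (codimension of the singular locus, a test element, uniform splittings) is actually established in your write-up.
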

\begin{proof}
In this proof we use the fact that F-regularity deforms for Gorenstein rings \cite{smooth} (Corollary 4.7).
We factor $R/I$ by part of a system of parameters from above: we annihilate $x_{11}$, $y_{11}$, the entries of $X$ below the first row and the entries of $Y$ to the right of the first column. Denote the factor ring $\overline{R}$.

Let 
\[Z_i=
\begin{bmatrix}
x_{i,1} & x_{1,i}\\
y_{i,1} & y_{1,i}
\end{bmatrix}
\] where $2\leq i\leq n$.

Then we have that 
\[\overline{R}\simeq \frac{K[\{Z_i\}_{i=2}^n]}{(\{\det Z_i\}_{i=2}^n)} \simeq \bigotimes\limits_{i=2}^n \frac{K[Z_i]}{(\det Z_i)},
\]
where $K[Z_i]/(\det Z_i)$ is a determinantal ring, known to be F-regular, \cite{det}, Theorem 7.14. Using  Theorem 7.45 in \cite{smooth}, we obtain that $\overline{R}$ is F-regular, and hence so is $R/I$. 
\end{proof}
\begin{cor}
$R/I$ is a domain and hence $I$ is a prime ideal when $K$ is a field of positive prime characteristic. 
\end{cor}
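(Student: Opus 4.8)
The plan is to deduce that $R/I$ is a domain directly from the fact, just proven, that $R/I$ is $F$-regular. The key input is the standard implication in tight closure theory that $F$-regular rings are normal, together with the observation that the ring $R/I$ is connected, i.e. has no nontrivial idempotents, which is immediate from the $\mathbb{N}$-grading: $I$ is a homogeneous ideal, so $R/I$ is a graded ring with $(R/I)_0 = K$, and a nonnegatively graded ring over a field has no idempotents other than $0$ and $1$. A normal ring is a finite product of normal domains, and a connected normal ring is therefore a domain; hence $R/I$ is a domain and $I$ is prime.

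Concretely, I would proceed as follows. First, recall that $F$-regular rings (whether strongly $F$-regular or weakly $F$-regular, in any reasonable sense and certainly in the sense used via \cite{det}, \cite{smooth}) are normal; this is \cite{det} (for instance, the discussion accompanying Theorem 7.14 there) or can be cited from Hochster–Huneke. So $R/I$ is normal. Second, invoke the grading: $R = K[X,Y]$ is standard graded, $I$ is generated by the entries $c_{11}, \dots, c_{n-1,n-1}$ of the commutator, each of which is homogeneous of degree $2$, so $R/I$ inherits an $\mathbb{N}$-grading with degree-zero piece $K$. A graded ring that is a product $A_1 \times \cdots \times A_r$ of nonzero rings would have $r$ orthogonal idempotents summing to $1$ sitting in degree $0$, forcing $r = 1$ since $(R/I)_0 = K$ is a field. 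Third, combine: a normal Noetherian ring is a finite direct product of normal domains, and we have just seen the product has one factor, so $R/I$ is a normal domain. In particular $I$ is a prime ideal.

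An alternative, perhaps cleaner, route avoids citing normality and instead uses the tensor-product structure already exhibited in the proof of $F$-regularity: after killing part of a system of parameters one gets $\overline{R} \simeq \bigotimes_{i=2}^n K[Z_i]/(\det Z_i)$, a tensor product of $2\times 2$ generic determinantal rings, which are domains; but a tensor product of domains over $K$ need not be a domain in general, so I would not rely on this — the normality argument is the robust one. A third option: $F$-regular implies strongly $F$-regular implies a fortiori that the ring is $F$-pure and (being a domain-candidate) one could argue via the fact that $R/I$ is a complete intersection hence equidimensional and connected in codimension one, plus Serre's criterion $(R_1)+(S_2)$ which normality supplies, to get that $\operatorname{Spec}(R/I)$ is irreducible. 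Any of these works; the cheapest is normal $+$ connected $\Rightarrow$ domain.

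The main obstacle — really the only thing to be careful about — is making sure the notion of "$F$-regular" cited from \cite{smooth} and \cite{det} is one that is known to imply normality (it is: both weak and strong $F$-regularity imply normality for Noetherian rings essentially of finite type over a field), and making sure connectedness is genuinely justified rather than asserted; the grading argument above does this cleanly. There is no hard computation here, so the corollary is essentially a one-line consequence: $R/I$ is $F$-regular, hence normal, hence (being connected, as a standard graded $K$-algebra) a domain, so $I$ is prime.
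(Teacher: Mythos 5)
Your argument is correct and supplies exactly the standard deduction that the paper leaves implicit: $F$-regular implies normal, the $\mathbb{N}$-grading on $R/I$ with degree-zero piece $K$ forces connectedness (no nontrivial idempotents), and a connected Noetherian normal ring is a domain, so $I$ is prime. You are also right to discard the tensor-product shortcut, since a tensor product of domains over a field need not be a domain.
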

Next we observe the following result. 

\begin{lem} The ring $R/I$ deforms to a Staynley-Reisner ring. \end{lem}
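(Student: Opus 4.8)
The plan is to exhibit a regular sequence of linear-minus-monomial forms whose quotient turns $R/I$ into (a polynomial extension of) a Stanley–Reisner ring — equivalently, to produce a flat family over $\mathbb{A}^1$ whose general fibre is $R/I$ and whose special fibre is defined by squarefree monomials. First I would recall the system of parameters and the splitting observed in Theorem \ref{SOP-I} and in the proof of $F$-regularity: after annihilating $x_{11}$, $y_{11}$, the entries of $X$ below the first row, and the entries of $Y$ to the right of the first column, the ring $R/I$ becomes $\bigotimes_{i=2}^n K[Z_i]/(\det Z_i)$, where $Z_i=\begin{bmatrix} x_{i1} & x_{1i}\\ y_{i1} & y_{1i}\end{bmatrix}$. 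So it suffices to deform a single $2\times 2$ determinantal ring $K[a,b,c,d]/(ad-bc)$ to a Stanley–Reisner ring, since a tensor product of Stanley–Reisner rings over $K$ is again a Stanley–Reisner ring (the associated simplicial complex is the join), and then assemble.

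Next I would carry out the deformation of $K[a,b,c,d]/(ad-bc)$: introduce the parameter $t$ and consider $K[a,b,c,d,t]/(ad-bc-td)$, or more transparently substitute $a \mapsto a - t$ so that the relation becomes $(a-t)d - bc = ad - bc - td$; setting $t=1$ gives $ad-bc = d$, i.e. $(a-1)d = bc$, and after the change of variables $a' = a-1$ this is again $a'd = bc$ — that is not yet monomial. The cleaner route is: $ad - bc$ deforms to $ad - bc$ with $bc$ replaced stepwise — instead I would use the standard fact that the $2\times 2$ determinant $ad-bc$, being a binomial, admits the Gröbner degeneration to its initial monomial $ad$ (or $bc$), and this initial degeneration is realized by an explicit flat one-parameter family $K[a,b,c,d,t]/(ad - t\,bc)$ whose fibre at $t=1$ is $R_i := K[Z_i]/(\det Z_i)$ and whose fibre at $t=0$ is $K[a,b,c,d]/(ad)$, a Stanley–Reisner ring. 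Flatness over $K[t]$ follows because $ad - tbc$ is a nonzerodivisor (it is a polynomial of positive degree in the $t$-variable, or one checks the Hilbert function is constant in $t$ since $\{ad, t bc\}$ keeps $ad$ as leading term in a fixed monomial order).

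Then I would globalize: form the family $\widetilde R = K[X,Y,t]/(\,\widetilde f_1,\dots,\widetilde f_{n-1}\,)$ where each $\widetilde f_\ell$ is the $\ell$-th diagonal entry $c_{\ell\ell}$ of the commutator with the cross-term $y$-monomials (the ones paired with $x$'s via the $x_{1j}-y_{j1}$ part of the sop) rescaled by $t$, arranged so that $\widetilde R / (t-1) \cong R/I$ and $\widetilde R/(t) \cong R'/I'$ with $R'/I'$ a Stanley–Reisner ring after killing the same linear sop; flatness over $K[t]$ is guaranteed because $I$ is a complete intersection (Theorem 5.3.1 of \cite{young}, recovered above), so its $t$-deformation stays a complete intersection of the same codimension and the Hilbert function is $t$-independent, hence the family is flat. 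The special fibre, being the quotient by the initial complete intersection of squarefree monomials (products of the monomials surviving the degeneration), is a Stanley–Reisner ring. I expect the main obstacle to be the bookkeeping in the last step: verifying that the simultaneous Gröbner degeneration of all $n-1$ diagonal commutator entries — not just a single $2\times 2$ block — is still a complete intersection on squarefree monomials, i.e. that the initial terms $\mathrm{in}(c_{11}),\dots,\mathrm{in}(c_{n-1,n-1})$ (with respect to a suitable weight giving the $t$-family) involve distinct squarefree monomials forming a regular sequence; this is where one leans on the complete-intersection property proved earlier, since an ideal generated by $n-1$ monomials of a polynomial ring is automatically a (squarefree) complete intersection once its height is $n-1$, which is preserved under flat degeneration.
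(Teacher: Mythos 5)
The paper's proof of this lemma is a one-line computation with a different meaning of ``deforms'' than what you have in mind. In this paper, ``$R/I$ deforms to a Stanley--Reisner ring'' means that there is a regular sequence in $R/I$ (part of the system of parameters from Theorem~\ref{SOP-I}) whose quotient is a Stanley--Reisner ring; this is the notion relevant to the deformation theorems for $F$-purity and $F$-regularity invoked throughout the paper. Concretely, after killing $x_{11}$, all $x_{ij}$ with $i\geq 2$ (including $j=1$), and all $y_{ij}$ with $j\geq 2$ (including $i=1$), the only surviving entries are the first row of $X$ and the first column of $Y$, and the diagonal of $\overline{XY-YX}$ is $\overline{c}_{ii}=-x_{1i}y_{i1}$ for $i\geq 2$; so the image of $I$ is already the squarefree monomial ideal $(x_{1j}y_{j1})_{2\leq j\leq n}$. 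No further degeneration is needed.

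Your plan has a genuine gap, not just extra length. First, the ``equivalently'' in your opening sentence is false: quotienting by a regular sequence of linear forms and producing a Gr\"obner degeneration over $\mathbb{A}^1$ are different operations, and the paper's lemma asserts the former. The family $K[a,b,c,d,t]/(ad-t\,bc)$ does realize a flat degeneration of $K[a,b,c,d]/(ad-bc)$ to the monomial ring $K[a,b,c,d]/(ad)$, but the special fibre is not a quotient of the general fibre by a regular sequence; in particular this does not establish the statement in the sense the paper uses it, and it would not feed into the Gorenstein deformation machinery for $F$-singularities. Second, you imported the $F$-regularity quotient, which deliberately keeps the first column of $X$ and first row of $Y$ alive to get the $2\times 2$ determinants $\det Z_i$, and concluded you must still pass from a binomial to a monomial; but the lemma's quotient is a larger part of the system of parameters from Theorem~\ref{SOP-I}, which also kills $x_{i1}$ ($i\geq 2$) and $y_{1j}$ ($j\geq 2$). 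If you do want to start from $\bigotimes_i K[Z_i]/(\det Z_i)$, the correct move in the regular-sequence framework is simply to quotient further by the regular sequence $(x_{i1},y_{1i})_{i\geq 2}$ (each $K[Z_i]/(\det Z_i)$ is Cohen--Macaulay of dimension $3$ and these two linear forms drop it to dimension $1$), landing on $\bigotimes_i K[x_{1i},y_{i1}]/(x_{1i}y_{i1})$, rather than to set up a $t$-family.
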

\begin{proof}
Factor $R/I$ by part of a system of parameters from Theorem \ref{SOP-I}. Then we have that the image of $I$ is generated by the square-free monomials $\{x_{1j}y_{j1}\,| \,  2\leq j \leq n\}$. 
\end{proof}

\section{Zero anti-diagonal commutator}
In this section we study the algebraic set defined by the vanishing of the anti-diagonal entries of the commutator matrix $C$. Recall that the ideal generated by these entries is denoted by $J$.
\begin{thm}
$R/J$ is a complete intersection ring of dimension $2n^2-n$.
\end{thm}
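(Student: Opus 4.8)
The plan is to mirror the structure used for $R/\mathcal{I}$ and $R/I$: exhibit an explicit homogeneous system of parameters for $R/J$ of length equal to $\dim R/J = 2n^2-n$, which forces $\operatorname{ht} J$ to equal the number of its generators (there are $n$ anti-diagonal entries $c_{i,n+1-i}$, $1\le i\le n$, and no trace relation among them), hence $R/J$ is a complete intersection of the claimed dimension. Since $\dim R = 2n^2$ and $J$ is generated by $n$ elements, it suffices to find $2n^2-n$ homogeneous ring elements whose images together with the generators of $J$ generate an $\mathfrak{m}$-primary ideal of $R$; the dimension count $n + (2n^2-n) = 2n^2$ then guarantees the system is a genuine system of parameters and the generators of $J$ form part of one, so they are a regular sequence.

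First I would look for a system of parameters of the same flavour as Theorem \ref{SOP-I}: kill one diagonal variable, all of $X$ below the first row (or some analogous large block), all of $Y$ in the complementary positions, and adjoin the $n$ "identification" elements $x_{1j}-y_{j1}$ or $x_{ij}-y_{\sigma(i,j)}$ chosen so that after passing to the quotient the surviving commutator anti-diagonal entries become squares (or square-free monomials) in the remaining variables. Concretely, after factoring out the sequence the matrices $\overline X,\overline Y$ should collapse to nearly-zero matrices with a single nonzero "cross" of variables meeting the anti-diagonal, so that $\overline{c}_{i,n+1-i}$ becomes, up to sign, $x_{1,n+1-i}^2$ or $-x_{1,n+1-i}x_{\ldots}$, whence $\operatorname{Rad}(\overline J)$ is the homogeneous maximal ideal of the residual polynomial ring. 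One should double-check the two parities of $n$ separately, exactly as in Theorems \ref{SOPodd}–\ref{SOPchar2}, and one should verify that no trace-type linear relation reduces the number of needed generators below $n$ — for the anti-diagonal this is true because the sum of the anti-diagonal entries of a commutator is not identically zero for $n\ge 3$.

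Alternatively — and this is likely the cleanest route — I would exploit symmetry: conjugating $X$ and $Y$ by the anti-diagonal permutation matrix $P$ (the matrix of the permutation $i\mapsto n+1-i$) sends the anti-diagonal of $XY-YX$ to the main diagonal of $(PXP)(PYP)-(PYP)(PXP)$. Since $X\mapsto PXP$, $Y\mapsto PYP$ is a graded $K$-algebra automorphism of $R=K[X,Y]$, it carries $J$ isomorphically onto $I$. Then the theorem is immediate from the already-cited fact that $R/I$ is a complete intersection of dimension $2n^2-n+1$ — except that the dimensions listed differ by $1$, which signals that the stated dimension $2n^2-n$ for $R/J$ must come from $J$ having one more essential generator than $I$; i.e. all $n$ anti-diagonal entries are needed whereas only $n-1$ diagonal entries are (the trace relation). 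So the conjugation argument must be applied with care: it shows $R/J \cong R/I'$ where $I'$ is generated by \emph{all} $n$ diagonal entries of a commutator, and one then observes $\dim R/I' = \dim R/I - 1 = 2n^2-n$ precisely because adding the redundant $n$-th diagonal entry (which is minus the sum of the others modulo the commutator's trace being zero — wait, the trace \emph{is} zero, so the $n$-th entry lies in the ideal generated by the first $n-1$)… The subtlety here is exactly the point to get right: I expect the main obstacle to be reconciling the generator counts and confirming that $J$ genuinely requires $n$ generators, so that the height is $n$ and the dimension is $2n^2-n$; resolving it will either require the explicit system-of-parameters computation of the first approach, or a careful argument that the anti-diagonal entries $c_{i,n+1-i}$ satisfy \emph{no} linear syzygy (unlike the diagonal, whose entries sum to zero), making them a minimal generating set of size $n$ and hence — being part of a system of parameters — a regular sequence.
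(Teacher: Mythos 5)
Your proposal does not reach a complete proof, and the paper's actual argument is a one-liner you never find. Since $\mathcal{I}=I+J$ is a complete intersection (by Theorems~\ref{SOPodd}, \ref{SOPeven}, \ref{SOPchar2}), its $2n-2$ or $2n-1$ minimal generators form a regular sequence in the Cohen--Macaulay ring $R$. One may choose that minimal generating set to contain all $n$ anti-diagonal entries of $C$ (for odd $n$ together with $n-2$ diagonal entries omitting the centre entry and one more; for even $n$ together with $n-1$ diagonal entries --- the counts match because the only linear relation among the $c_{ij}$ is the trace relation, which involves diagonal entries only). Reordering a regular sequence and taking an initial segment preserves regularity, so the $n$ anti-diagonal entries form a regular sequence; hence $\operatorname{ht}J=n$ and $\dim R/J = 2n^2-n$. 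This is exactly what the paper writes: ``Since $\mathcal{I}$ is generated by a regular sequence, then so is $J$.''

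Your symmetry route is not merely subtle --- it is wrong, and the dimension mismatch you noticed is the definitive obstruction rather than something to be ``reconciled.'' Conjugation by the anti-diagonal permutation matrix $P$ is the automorphism $x_{ij}\mapsto x_{n+1-i,\,n+1-j}$, $y_{ij}\mapsto y_{n+1-i,\,n+1-j}$, under which $C\mapsto PCP$ with $(PCP)_{i,n+1-i}=C_{n+1-i,\,i}$ --- still an \emph{anti-diagonal} entry. So this automorphism maps $J$ to $J$ and $I$ to $I$; it does not interchange them, and no automorphism of $R$ can, precisely because $\dim R/I=2n^2-n+1\neq 2n^2-n=\dim R/J$. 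Your first route (an explicit homogeneous system of parameters for $R/J$ of length $2n^2-n$, forcing $\operatorname{ht}J=n$) is sound and is in fact carried out in the paper's Theorem~\ref{SOP-J}: kill all entries of $X$ below row one and all entries of $Y$ right of column one, adjoin $x_{1j}-y_{n+1-j,\,j}$, and observe that the anti-diagonal entries reduce to $-x_{1,\,n+1-i}^2$. But you only gesture at this; you never construct the sequence. As written, the proposal leaves both routes unfinished and therefore does not prove the theorem.
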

\begin{proof}
Since $\mathcal{I}$ is generated by a regular sequence, then so is $J$.
\end{proof}
Next we find a system of parameters for $R/J$. We already have one obtained from a system of parameters for $R/\mathcal{I}$. However, the one we give here is simpler.
\begin{thm}\label{SOP-J}
A set of elements 
\[x_{ij}, \text{ where } 2\leq i\leq n, \, 1\leq j\leq n,\]
\[y_{ij}, \text{ where } 1\leq i\leq n, \, 2\leq j\leq n,\]
\[x_{1j}-y_{n-j+1,j}, \text{ where } 1\leq j\leq n\]
is a homogeneous system of parameters and a regular sequence on $R/J$.
\end{thm}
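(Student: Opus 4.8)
The plan is to run the argument of Theorem~\ref{SOP-I} essentially verbatim. The list contains $2(n^{2}-n)+n=2n^{2}-n$ homogeneous linear forms, which is exactly $\dim R/J$; together with the $n$ (quadratic) generators of $J$, namely the anti-diagonal entries $c_{i,n+1-i}$ for $1\le i\le n$, this gives $2n^{2}=\dim R$ homogeneous elements. It therefore suffices to show that the ideal they generate in $R$ is $\mathfrak{m}$-primary. If so, these $2n^{2}$ elements form a homogeneous system of parameters for the Cohen--Macaulay ring $R$; since in a Cohen--Macaulay ring a system of parameters is a regular sequence in every order, we may list the $n$ generators of $J$ first, whereupon the remaining $2n^{2}-n$ listed elements form a regular sequence, hence a homogeneous system of parameters, on $R/J$.

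So everything reduces to checking that $R$ modulo $J$ and the listed elements is Artinian. First I would kill the coordinate part: setting $x_{ij}=0$ for $i\ge 2$ and $y_{ij}=0$ for $j\ge 2$ leaves $\overline X$ with only its first row $(x_{11},\dots,x_{1n})$ possibly nonzero and $\overline Y$ with only its first column $(y_{11},\dots,y_{n1})^{\mathsf{T}}$ possibly nonzero. These matrices have rank at most one, and a short direct computation shows that the $(i,j)$ entry of $\overline{XY-YX}$ equals $-y_{i1}x_{1j}$ for every $(i,j)\neq(1,1)$. Since $n\ge 3$, no anti-diagonal slot equals $(1,1)$, so
\[\overline{c}_{i,n+1-i}=-y_{i1}x_{1,n+1-i}\qquad(1\le i\le n).\]
Hence the image of $J$ is the monomial ideal $\bigl(y_{i1}x_{1,n+1-i}:1\le i\le n\bigr)$ in $K[x_{11},\dots,x_{1n},y_{11},\dots,y_{n1}]$. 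Now imposing the remaining relations $x_{1j}-y_{n-j+1,1}$ for $1\le j\le n$ substitutes $x_{1,n+1-i}=y_{i1}$ in the $i$-th generator, collapsing it to $y_{i1}^{2}$; the whole quotient becomes $K[y_{11},\dots,y_{n1}]/(y_{11}^{2},\dots,y_{n1}^{2})$, which is Artinian, as required.

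The computation is elementary, so the only point needing care is the pairing in the last family of relations: after the coordinate reductions the $i$-th anti-diagonal generator is the monomial $y_{i1}\,x_{1,n+1-i}$, and the relation imposed on $x_{1j}$ must identify $x_{1,n+1-i}$ with precisely the first-column variable $y_{i1}$ for each $i$; matching it with any other variable would leave a positive-dimensional Stanley--Reisner-type quotient rather than $K[y_{11},\dots,y_{n1}]/(y_{i1}^{2})$, and this is what forces the index $n-j+1$ in the $y$-term. It is also prudent to verify the two extreme anti-diagonal positions $(1,n)$ and $(n,1)$ by hand, since there the contributions of $XY$ and $YX$ enter asymmetrically. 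Finally, exactly as in the zero-diagonal case, omitting the last family of relations already shows that $R/J$ deforms to the Stanley--Reisner ring $K[x_{11},\dots,x_{1n},y_{11},\dots,y_{n1}]/\bigl(x_{1,n+1-i}\,y_{i1}:1\le i\le n\bigr)$.
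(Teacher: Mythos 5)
Your proof is correct and follows essentially the same route as the paper: factor out by the linear forms, observe that $\overline X$ keeps only its first row and $\overline Y$ only its first column, compute the anti-diagonal entries of the commutator as products, and note that the final identifications turn them into squares, so the quotient is Artinian. One point worth flagging: the theorem as printed lists the relation $x_{1j}-y_{n-j+1,j}$, but as written this would be redundant for $j\ge 2$ (since $y_{n-j+1,j}$ is already in the list) and would leave a positive-dimensional quotient generated by the single relation $y_{n1}^{2}$. Both your proof and the paper's own proof actually use $x_{1j}-y_{n-j+1,1}$ (second index $1$, not $j$), which is clearly the intended statement; you have silently corrected what appears to be a typo in the theorem. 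Your remark about the need for the first index $n-j+1$ and the suggestion to double-check the corners $(1,n)$ and $(n,1)$ are sensible, and your Stanley--Reisner observation matches the lemma the paper records immediately afterward.
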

\begin{proof}
If we factor out by the ideal generated by the set elements, we obtain the following matrices
\[\overline{X}=\left[\begin{array}{cccc}
x_{11}&x_{12}&\ldots&x_{1n}\\
0& 0&\ldots&0 \\
\ldots\\
0& 0&\ldots&0 \\
\end{array}\right] \text{ and } \overline{Y}=\left[\begin{array}{cccc}
x_{1n}&0&\ldots&0 \\
x_{1,n-1}&0&\ldots&0 \\
\ldots\\
x_{11}&0&\ldots&0 \\
\end{array}\right]\]
Then the generators for the image of $J$ are $x_{11}^2, x_{12}^2, \ldots, x_{1n}^2$. Thus, the factor ring of $R/J$ by the ideal generated by the set elements has Krull dimension 0. Hence the result. 
\end{proof}
\begin{thm}
Let $K$ be a field of positive prime characteristic $p$. Then $R/J$ is $F$-pure. 
\end{thm}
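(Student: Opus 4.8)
The plan is to mimic the strategy already used for $R/\mathcal{I}$ in Section 3, combined with the fact that $F$-purity deforms for Gorenstein (in particular complete intersection) rings, \cite{fedder}, Theorem 3.4. Since $R/J$ is a complete intersection, it suffices to factor out by part of the system of parameters from Theorem \ref{SOP-J} and check that the resulting ring is $F$-pure via Fedder's criterion: if $\overline{J}$ is the image of $J$ and $\omega$ is the product of the generators of $\overline{J}$, then $\overline{J}^{[p]}:\overline{J}=(\omega^{p-1})+\overline{J}^{[p]}$ because $\overline{J}$ is generated by a regular sequence, and $R/\overline{J}$ (hence $R/J$) is $F$-pure as soon as $\omega^{p-1}$ has a monomial term with no variable of degree $\geq p$, i.e.\ a term not in $\mathfrak{m}^{[p]}$.

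First I would factor $R$ by the part of the system of parameters from Theorem \ref{SOP-J} that does not identify $x$'s with $y$'s — that is, annihilate all $x_{ij}$ with $i\geq 2$ and all $y_{ij}$ with $j\geq 2$, leaving the entries $x_{11},\dots,x_{1n}$ in the first row of $\overline{X}$ and $y_{11},\dots,y_{n1}$ in the first column of $\overline{Y}$ (without the extra relations $x_{1j}=y_{n-j+1,j}$, in keeping with the Assumption stated after the complete-intersection theorem). With these matrices $\overline{X}$ (only first row nonzero) and $\overline{Y}$ (only first column nonzero), the commutator $\overline{C}=\overline{X}\,\overline{Y}-\overline{Y}\,\overline{X}$ is easy to write down explicitly: the $(i,j)$ entry of $\overline{X}\,\overline{Y}$ is $x_{1j}y_{i1}$ only when $i=1$ and otherwise $0$ appropriately, so the anti-diagonal entries $\overline{c}_{i,n-i+1}$ become simple bilinear forms in the $x_{1j}$ and $y_{j1}$. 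Then $\omega$ is an explicit product of $n$ such bilinear forms, and I would expand $\omega^{p-1}$ by the multinomial theorem and exhibit one monomial — the natural candidate being the product of all the surviving indeterminates $x_{1j}$, $y_{j1}$ each raised to the power $p-1$ — whose coefficient is a product/sum of multinomial coefficients that is nonzero mod $p$. As in the $n=3$ proposition, the cleanest situation is when this monomial arises in a unique way from the factors of $\omega$, forcing its coefficient to be $\pm 1$.

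The main obstacle I anticipate is precisely the combinatorial bookkeeping of that coefficient: unlike the zero-diagonal case, the anti-diagonal entry $\overline{c}_{(n+1)/2,(n+1)/2}$ (when $n$ is odd, the center of the anti-diagonal sits on the main diagonal) may contribute a longer sum, and the diagonal entry structure could cause the target monomial to be reachable in more than one way, so the coefficient becomes a nontrivial sum of products of multinomial coefficients modulo $p$ — the analogue of Lemma \ref{binom}. I would first check whether a judicious choice of which generator of $J$ to drop (recall $\operatorname{tr}C=0$ is not available for the anti-diagonal, so all $n$ anti-diagonal entries are used, but one can still rescale or relabel) makes the target monomial uniquely attained; if so, the proof is short. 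If not, the fallback is an induction on $n$ with step $2$, exactly paralleling the inductive $F$-purity proofs for $R/\mathcal{I}$: peel off the first and last rows and columns, observe that the corner entries of $\overline{C}$ are controlled bilinear forms in newly introduced boundary variables lying in an ideal $\mathcal{L}$, and combine the induction hypothesis monomial $\mu$ for $R_0/\overline{J}_0$ with the product of the boundary variables raised to $p-1$ to produce the required monomial term of $\omega^{p-1}$ outside $\mathfrak{m}^{[p]}$. Either way, $F$-purity of $R/J$ follows, and as a corollary $J$ is a radical ideal in positive characteristic.
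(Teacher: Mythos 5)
Your proposal is correct, but it takes a genuinely different route from the paper's. The paper's own proof is a one-liner: since $\mathcal{I}=I+J$ is generated by a regular sequence, $R/\mathcal{I}$ is obtained from $R/J$ by factoring out a regular sequence (the images of the diagonal generators of $I$), and because $R/J$ is Gorenstein while $R/\mathcal{I}$ was already shown to be $F$-pure, $F$-purity lifts by Fedder's deformation theorem. Your route instead deforms $R/J$ directly along part of the system of parameters from Theorem \ref{SOP-J} and checks Fedder's criterion there — and the combinatorial worries you raise at the end in fact do not materialize. After killing the $x_{ij}$ with $i\ge 2$ and the $y_{ij}$ with $j\ge 2$, one has $(\overline{X}\,\overline{Y})_{ij}=0$ unless $(i,j)=(1,1)$ while $(\overline{Y}\,\overline{X})_{ij}=y_{i1}x_{1j}$; since $n\ge 3$ no anti-diagonal position coincides with $(1,1)$, so every generator of $\overline{J}$ collapses to the single square-free monomial $-y_{i1}x_{1,n+1-i}$ — including the center entry $\overline{c}_{(n+1)/2,(n+1)/2}$ for $n$ odd, which is not a longer sum as you feared. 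These $n$ monomials are pairwise coprime and jointly use each of $x_{11},\dots,x_{1n},y_{11},\dots,y_{n1}$ exactly once, so $\omega^{p-1}=\pm\bigl(\prod_{j}x_{1j}\prod_{i}y_{i1}\bigr)^{p-1}$ is a single monomial with coefficient $\pm1$ lying outside $\mathfrak{m}^{[p]}$, and Fedder's criterion is immediate; no induction or fallback is needed. In effect you are exhibiting the deformation of $R/J$ to a Stanley--Reisner ring, which the paper records as a separate lemma right after this theorem. The trade-off: your argument is more self-contained, bypassing the hard $F$-purity proof for $R/\mathcal{I}$ (and Lemma \ref{binom} entirely), whereas the paper's proof is shorter given that $R/\mathcal{I}$ was already in hand.
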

\begin{proof}
We have that $\mathcal{I}=I+J$ is generated by a regular sequence, therefore, $R/\mathcal{I}$ is a deformation of $R/J$. Since $R/\mathcal{I}$ is $F$-pure, then so is $R/J$.
\end{proof}
\begin{cor}
$R/J$ is a reduced ring, that is, $J$ is a radical ideal, when $K$ is a field of positive prime characteristic.
\end{cor}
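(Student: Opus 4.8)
The plan is to obtain this corollary immediately from the $F$-purity of $R/J$ just established, using the standard fact that an $F$-pure ring of prime characteristic is reduced. Thus essentially nothing new needs to be proved: one invokes the previous theorem together with a textbook implication, and then rephrases ``$R/J$ is reduced'' as ``$J$ equals its own radical.''

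For the record I would recall why $F$-purity forces reducedness. By definition, $R/J$ is $F$-pure when the Frobenius map, equivalently its $e$-fold iterate $R/J \to F^e_*(R/J)$ for every $e \ge 1$, is a pure, hence injective, homomorphism of $R/J$-modules. If $\bar a \in R/J$ is nilpotent, pick $e$ with $p^e$ at least the nilpotency index, so that $\bar a^{p^e} = 0$; since $\bar a^{p^e}$ is exactly the image of $\bar a$ under the injective $e$-fold Frobenius, we get $\bar a = 0$. Hence $R/J$ has trivial nilradical. (See, e.g., \cite{fedder}, or the discussion of $F$-pure singularities in \cite{det}.)

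Since the nilradical of $R/J$ is the image of $\sqrt{J}/J$, its vanishing is precisely the assertion $\sqrt{J} = J$, i.e., that $J$ is a radical ideal. I do not anticipate any genuine obstacle; the only point meriting a word of care is that one should check $R/J$ meets whichever formulation of $F$-purity is in use (for $K$ that is $F$-finite, e.g.\ perfect, $R/J$ is $F$-finite and all the standard formulations coincide and behave well under the constructions of the preceding sections), but the implication ``$F$-pure $\Rightarrow$ reduced'' is valid in all of them, so the conclusion follows regardless.
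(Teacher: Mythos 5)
Your argument is correct and matches the paper's (implicit) reasoning exactly: the corollary follows immediately from the preceding theorem that $R/J$ is $F$-pure, together with the standard fact that $F$-pure rings are reduced, which you correctly recall via injectivity of the iterated Frobenius. Nothing to add.
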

Here we also observe that the ring $R/J$ has a deformation to a Staynley-Reisner ring. 
\begin{lem} The ring $R/J$ deforms to a Staynley-Reisner ring. \end{lem}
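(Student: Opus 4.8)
The plan is to mimic exactly the argument used for $R/I$ in the previous section: exhibit a part of a system of parameters whose quotient makes the image of $J$ a squarefree monomial ideal, i.e. a Stanley--Reisner ideal. First I would factor $R/J$ by the part of the system of parameters from Theorem \ref{SOP-J} that identifies $x$'s with $y$'s, namely by the elements $x_{ij}$ with $2\le i\le n$, $y_{ij}$ with $2\le j\le n$, but \emph{without} the binomial relations $x_{1j}-y_{n-j+1,j}$. Concretely, annihilate all entries of $X$ below the first row and all entries of $Y$ to the right of the first column. As in the proof of Theorem \ref{SOP-J}, the surviving matrices are
\[
\overline{X}=\left[\begin{array}{cccc}
x_{11}&x_{12}&\ldots&x_{1n}\\
0&0&\ldots&0\\
\vdots&&&\vdots\\
0&0&\ldots&0
\end{array}\right]
\quad\text{and}\quad
\overline{Y}=\left[\begin{array}{cccc}
y_{11}&0&\ldots&0\\
y_{21}&0&\ldots&0\\
\vdots&&&\vdots\\
y_{n1}&0&\ldots&0
\end{array}\right].
\]

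Next I would compute $\overline{C}=\overline{X}\,\overline{Y}-\overline{Y}\,\overline{X}$ and read off its anti-diagonal entries. A direct computation shows that the $(i,n+1-i)$ entry of $\overline{XY-YX}$ equals $x_{1n}y_{i1}$ for $i\ge 2$, and for $i=1$ the entry $\overline{c}_{1,n}$ is (a sign times) $x_{1n}(y_{11}-\text{something})$; but since $\overline{XY}$ has nonzero entries only in its first column and $\overline{YX}$ only in its first row, the anti-diagonal entries are each a single product $x_{1,\,n+1-i}\,y_{i1}$ up to sign (the only overlap being the $(1,n)$ position, which I would check separately). Thus the image of $J$ is generated by the squarefree monomials $\{x_{1,\,n+1-i}\,y_{i1}\mid 1\le i\le n\}$ (equivalently $\{x_{1j}y_{\,n+1-j,\,1}\mid 1\le j\le n\}$), which is a Stanley--Reisner ideal. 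Since the elements we factored out form part of a system of parameters and hence (by the complete intersection property, using Theorem \ref{SOP-J}) a regular sequence on $R/J$, this exhibits $R/J$ as a deformation of the Stanley--Reisner ring $R'/(\,x_{1j}y_{n+1-j,1}\,)$.

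The only subtlety — and the step I would be most careful about — is the $i=1$ anti-diagonal entry $\overline{c}_{1,n}$: one must verify it is not accidentally a non-squarefree or non-monomial expression after the substitution. Tracking which indeterminates survive, $(\overline{XY})_{1n}=\sum_k x_{1k}\,\overline{y}_{kn}=0$ since every $\overline{y}_{kn}=0$ for $k\ge 1$ (the only nonzero column of $\overline{Y}$ is the first), so $\overline{c}_{1,n}=-(\overline{YX})_{1n}=-\,y_{11}x_{1n}$, again a squarefree monomial. Hence all $n$ anti-diagonal generators reduce to squarefree degree-two monomials and the lemma follows. I would therefore write the proof as: ``Factor $R/J$ by the part of a system of parameters from Theorem \ref{SOP-J} consisting of the $x_{ij}$ and $y_{ij}$ listed there; then the image of $J$ is generated by the squarefree monomials $\{x_{1j}\,y_{n+1-j,1}\mid 1\le j\le n\}$, so $R/J$ deforms to the corresponding Stanley--Reisner ring.''
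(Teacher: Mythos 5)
Your proposal is correct and follows exactly the paper's approach: factor $R/J$ by the non-identification part of the system of parameters from Theorem \ref{SOP-J} (kill $x_{ij}$ for $i\geq 2$ and $y_{ij}$ for $j\geq 2$), then read off the anti-diagonal of the commutator as squarefree quadratic monomials. Your computed generating set $\{x_{1,n+1-i}\,y_{i1}\mid 1\leq i\leq n\}$ is in fact the right one (one checks directly that $\overline{c}_{i,n+1-i}=-x_{1,n+1-i}\,y_{i1}$), whereas the paper writes $\{x_{1j}y_{j1}\}$, apparently a small indexing slip; either set is squarefree monomial, so the Stanley--Reisner conclusion is unaffected. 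Note also a typo in your intermediate sentence where you write $x_{1n}y_{i1}$ for the $(i,n+1-i)$ entry with $i\geq 2$ — you correct it to $x_{1,n+1-i}\,y_{i1}$ immediately after, and the final conclusion is right.
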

\begin{proof}
Factor $R/J$ by part of a system of parameters from Theorem \ref{SOP-J}. Then we have that the image of $J$ is generated by the square-free monomials $\{x_{1j}y_{j1}\, |\, 1\leq j \leq n\}$. 
\end{proof}
\section{Conjectures}
In this section we state conjectures that naturally arise in the context of the algebraic sets defined by the ideals $\mathcal{I}$, $I$ and $J$.
\begin{conj}
Let $K$ be a field of positive prime characteristic. Then $R/\mathcal{I}$ and $R/J$ are $F$-regular rings. In particular, they are integral domains.
\end{conj}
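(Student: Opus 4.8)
The plan is to reduce everything to $R/\mathcal{I}$ and then to exploit that $R/\mathcal{I}$ is Gorenstein with negative $a$-invariant. First, since $\mathcal{I}=I+J$ and $\mathcal{I}$ is a complete intersection of the expected height, the images in $R/J$ of the diagonal entries of $C$ form a regular sequence, so $R/\mathcal{I}=(R/J)/(\text{regular sequence})$. As $R/J$ is Gorenstein, the deformation of $F$-regularity for Gorenstein rings (\cite{smooth}, Corollary 4.7), already used in the paper, shows that it suffices to prove $R/\mathcal{I}$ is $F$-regular; the statement for $R/J$ then follows, and the ``in particular'' clause follows since $F$-regular rings are normal, hence domains when connected.

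Second, $R/\mathcal{I}$ is a complete intersection, hence Gorenstein, so weak $F$-regularity coincides with $F$-rationality. Moreover $R/\mathcal{I}$ is standard graded in $2n^2$ variables with minimal generators of $\mathcal{I}$ in degree $2$; by the previous theorem there are $2n-2$ of them when $n$ is odd and $2n-1$ when $n$ is even, so the $a$-invariant equals $2(2n-2)-2n^2=-2((n-1)^2+1)<0$ (resp. $2(2n-1)-2n^2=-2(n-1)^2<0$) for every $n\ge 3$. Combined with the Cohen--Macaulayness of $R/\mathcal{I}$, the $a$-invariant criterion for $F$-rationality (the positive-characteristic analogue of the rational-singularities statement: a graded Cohen--Macaulay ring with negative $a$-invariant that is $F$-rational on its punctured spectrum is $F$-rational) reduces the problem to showing that $(R/\mathcal{I})_P$ is $F$-rational for every homogeneous prime $P\neq\mathfrak{m}$.

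Third, I would establish this punctured-spectrum statement by induction on $n$ with step $2$, with base cases $n=3,4$ handled directly (e.g.\ via \cite{macaulay2}, as was done for $F$-purity), using the recursive block structure $\overline{C}=\overline{C}_0+Z$ already exploited in Theorems \ref{SOPodd} and \ref{SOPeven}. At a prime $P$ where $\mathrm{Spec}(R/\mathcal{I})$ is smooth there is nothing to prove; for the remaining primes the idea is that after inverting a suitable corner entry (one of $x_{k+1,1},x_{k+1,2k+1},x_{k+2,1},x_{2k+1,k+1}$ in the odd case, and their analogues in the even case) the commutator equations can be solved for the corresponding block of boundary variables, realizing $R/\mathcal{I}$ locally as a polynomial extension of a localization of $R_0/\mathcal{I}_0$, which is $F$-regular --- hence $F$-rational --- by the induction hypothesis; primes containing all such corner entries cut out a locus that one checks has codimension $\ge 2$ and over which the same reduction applies after changing which boundary block is eliminated. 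In this way the singular locus of $V(\mathcal{I})$ is governed by that of $V(\mathcal{I}_0)$ together with a determinantal-type contribution, and $F$-rationality propagates upward.

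The main obstacle is precisely this last step. Unlike for $R/I$, the natural coordinate degenerations of $R/\mathcal{I}$ land on Stanley--Reisner rings (see the preceding lemma), which are not domains, so one cannot simply deform upward from an $F$-regular model as in the proof for $R/I$; and a direct Fedder-style computation would require producing, for every $e$, a monomial witness for strong $F$-regularity carrying a fixed multiplier independent of $e$, which the combinatorial bookkeeping used for $F$-purity does not obviously supply. Making the localization-and-induction argument rigorous therefore hinges on an explicit description of the singular locus of $V(\mathcal{I})$ --- identifying exactly which primes fail the Jacobian criterion and verifying each reduces to a smaller instance --- and this is where the real difficulty of the conjecture lies.
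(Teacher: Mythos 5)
This statement is a \emph{conjecture} in the paper; the authors do not prove it, and neither do you, as you yourself acknowledge in your final paragraph. What you have written is a program, not a proof, so the honest verdict is: this is not a proof of the statement.

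That said, the preliminary reductions you set up are sound and worth noting. The passage from $R/J$ to $R/\mathcal{I}$ is correct: since $\mathcal{I}=I+J$ is generated by a regular sequence, the images of the diagonal entries of $C$ form a regular sequence in the Gorenstein ring $R/J$, so by the deformation theorem for $F$-regularity in the Gorenstein case (the same \cite{smooth}, Corollary 4.7, that the paper invokes for $R/I$), $F$-regularity of $R/\mathcal{I}$ would indeed imply $F$-regularity of $R/J$. The identification of $F$-regularity with $F$-rationality for the Gorenstein ring $R/\mathcal{I}$ is also correct, as is the $a$-invariant computation $a(R/\mathcal{I})=2\cdot(\#\text{generators})-2n^2<0$ for $n\geq 3$, and the graded criterion reducing $F$-rationality to the punctured spectrum under a negative $a$-invariant is a genuine theorem. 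Your observation about why the paper's technique for $R/I$ (deforming to a tensor product of $2\times 2$ determinantal rings and lifting) does not transfer to $R/\mathcal{I}$ or $R/J$ --- the analogous degenerations land on Stanley--Reisner rings, which are not domains, hence not $F$-regular --- is also an accurate diagnosis of why the problem is harder.

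The gap is exactly where you say it is. The third step --- proving $F$-rationality of $(R/\mathcal{I})_P$ for all non-maximal primes $P$ by inverting corner entries and inducting through the block structure --- is only a heuristic. You do not identify the singular locus, do not verify that the inversion-and-elimination actually realizes a localization of $R/\mathcal{I}$ as a smooth extension of a localization of $R_0/\mathcal{I}_0$, and do not rule out the degenerate primes containing all the corner variables (where the induction does not obviously bite and the ``codimension $\geq 2$'' claim is asserted rather than checked). Since the entire content of the conjecture lies in that step, the proposal does not establish the result; it only reformulates the difficulty in a potentially useful way. A reviewer should treat this as a possibly promising research sketch consistent with the authors' leaving the statement open, not as a solution.
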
 

\begin{conj}
The ideals $\mathcal{I}$, $I$ and $J$ are prime ideals in all characteristics.
\end{conj}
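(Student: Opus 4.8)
A natural line of attack is to prove the stronger assertion that $R/\mathcal{I}$, $R/I$ and $R/J$ are \emph{normal} domains. All three rings are complete intersections, hence Cohen--Macaulay, and therefore satisfy Serre's condition $(S_2)$; moreover, since $\mathcal{I}$, $I$ and $J$ are homogeneous, each quotient is a graded ring with degree-zero part $K$, so its spectrum is connected. By Serre's criterion a connected Noetherian ring satisfying $(S_2)$ and the regularity-in-codimension-one condition $(R_1)$ is a normal domain, and a normal domain is in particular reduced and prime. So the whole statement reduces to verifying $(R_1)$ for each of the three rings. For $R/I$ over a field of positive characteristic this is already in hand, since $R/I$ is $F$-regular and hence normal; thus the genuinely new content of the conjecture is the $(R_1)$ property of $R/\mathcal{I}$ and $R/J$ in all characteristics, together with that of $R/I$ in characteristic $0$. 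Since being a domain and being normal both descend along the faithfully flat extension $K\hookrightarrow\overline{K}$, it is enough to carry this out over an algebraically closed field.

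The core of the proof is then the Jacobian criterion. Consider $R/\mathcal{I}$: its defining ideal is generated by a regular sequence of $c=\operatorname{ht}\mathcal{I}$ quadrics taken from the diagonal and anti-diagonal of $C=XY-YX$, so the singular locus of $R/\mathcal{I}$ is $V\big(\mathcal{I}+I_c(\mathrm{Jac})\big)$, where $\mathrm{Jac}$ is the Jacobian of these generators in the $2n^2$ variables and $I_c(\mathrm{Jac})$ is its ideal of maximal minors, and one must establish $\operatorname{ht}\big(\mathcal{I}+I_c(\mathrm{Jac})\big)\ge c+2$. The partial derivatives are completely explicit: the derivative of the generator attached to a position $(i,\ell)$ involves only the entries of the $i$-th and $\ell$-th rows and columns of $X$ and $Y$, and its coefficients are $\pm$ those entries, so deciding where the rows of $\mathrm{Jac}$ become linearly dependent is a concrete rank question about a structured matrix of linear forms. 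I would carry this out by induction on $n$ with step $2$, in the spirit of Theorems \ref{SOPodd}, \ref{SOPeven} and \ref{SOPchar2}: exploit the same block decomposition of $X$ and $Y$, treat the rows of $\mathrm{Jac}$ coming from the outer positions and the corner entries of $C$ separately, and relate the rank behaviour of the remaining block to the size-$(n-2)$ case in order to invoke the inductive hypothesis; the same template should serve for $R/J$ and for $R/I$.

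The passage to characteristic $0$ is then either immediate — one runs the Jacobian computation over a field of characteristic $0$, where it is if anything easier — or can be deduced by reduction modulo $p$: the $\mathbb{Z}$-algebra $\mathbb{Z}[X,Y]/\mathcal{I}_{\mathbb{Z}}$ is flat over $\mathbb{Z}$, because $\mathbb{Z}[X,Y]$ is Cohen--Macaulay, the ideal has the expected height, and each prime $p$ is a nonzerodivisor on the quotient; the set of primes of $\mathbb{Z}$ over which the fibre is geometrically integral is constructible and, by the positive-characteristic case, contains every closed point, hence equals $\operatorname{Spec}\mathbb{Z}$, so the generic fibre is geometrically integral and base change finishes the job. (For $R/I$ the closed fibres are geometrically integral because the $F$-regularity proof above — reducing to a tensor product of determinantal rings — works over $\overline{\mathbb{F}}_p$.)

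The step I expect to be the real obstacle is the codimension estimate for the singular locus, i.e.\ the $(R_1)$ property itself. The generic determinantal bound for the height of $I_c(\mathrm{Jac})$ is far too weak here, since the Jacobian is very far from a generic matrix of linear forms, so one is forced to use the precise combinatorics of the commutator equations. The maximal rank-drop strata are easy to pin down — for $R/I$, for instance, the Jacobian of the diagonal entries of $C$ vanishes identically exactly when $X$ is diagonal, a locus of codimension $n^2-n$, comfortably $\ge n+1$ for $n\ge 3$ — but controlling all the intermediate rank strata uniformly in $n$, so as to guarantee codimension at least $2$ at every stratum, is where the difficulty lies; the block induction used for the systems of parameters seems the right framework in which to do it. Once $(R_1)$ is established, the remaining implications — $(R_1)+(S_2)\Rightarrow$ normal $\Rightarrow$ domain, and positive characteristic $\Rightarrow$ characteristic $0$ — are routine.
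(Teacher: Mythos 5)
This statement is one of the paper's open conjectures, not a proved theorem, and your proposal does not close it: it is a programme, not a proof. The decisive step --- verifying Serre's condition $(R_1)$ for $R/\mathcal{I}$, $R/I$ and $R/J$, i.e.\ the codimension estimate $\operatorname{ht}\bigl(\mathcal{I}+I_c(\mathrm{Jac})\bigr)\ge c+2$ --- is exactly where you stop, and you say so yourself (``the step I expect to be the real obstacle''). Sketching that one ``would'' run a block induction on $n$ in the spirit of Theorems \ref{SOPodd}--\ref{SOPchar2} is not a substitute for controlling all the intermediate rank strata of a highly non-generic Jacobian; nothing in the paper or in your write-up supplies that control. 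A secondary, structural worry: you replace primeness by the strictly stronger assertion of normality, so even a correct execution of your plan could fail (the Jacobian locus could have codimension $1$) while the conjecture remains true; $(R_1)$ is sufficient but not necessary for a complete intersection to be a domain.

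Two further concrete problems. First, your reduction-mod-$p$ passage to characteristic $0$ is circular for $\mathcal{I}$ and $J$: it needs geometric integrality of every closed fibre of $\mathbb{Z}[X,Y]/\mathcal{I}_{\mathbb{Z}}$, i.e.\ primeness in every positive characteristic, which is itself part of the conjecture --- the paper only establishes $F$-purity (hence reducedness) of $R/\mathcal{I}$ and $R/J$, and even $F$-regularity of these two rings is left as Conjecture~6.1. Only for $I$ is the positive-characteristic input available, via the $F$-regularity theorem. Second, your sample computation of the deepest stratum is off: the Jacobian of the diagonal entries $c_{ii}=\sum_j(x_{ij}y_{ji}-x_{ji}y_{ij})$ has rows with entries $\pm x_{ij},\pm y_{ij}$ for $j\ne i$, so it vanishes identically exactly where \emph{both} $X$ and $Y$ have zero off-diagonal part, a locus of codimension $2(n^2-n)$, not $n^2-n$ and not ``exactly when $X$ is diagonal.'' This slip is harmless for the bound you wanted there, but it underlines that the stratification analysis has not actually been done. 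As it stands, the proposal identifies a plausible route to the conjecture but proves nothing beyond what the paper already establishes.
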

\section{Appendix}

The aim of this section is to display how matrices $X$ and $Y$ look like after the ring $R$ is factored out by a system of parameters from Theorems \ref{SOPodd}, \ref{SOPeven}, \ref{SOPchar2}. The base cases $n=3$ and $n=4$ are respectively highlighted in the center of each of the matrices.  \\

When $n=7$, we have 
\begin{small}\[\overline{X}=\left[
\begin{array}{cc|ccc|cc}
0&0&0&0&0&0&0\\
0&0&0&0&0&0&0\\
\hline
0&0&0&0&0&0&0\\
x_{41}&x_{42}&x_{43}&0&x_{45}&x_{46}&x_{47}\\
x_{51}&x_{52}&x_{53}&x_{54}&0&0&0\\
\hline
0&0&0&x_{64}&0&0&0\\
0&0&0&x_{74}&0&0&0\\ 
\end{array} \right]\]
and 
\[\overline{Y}=\left[
\begin{array}{cc|ccc|cc}
0&0&0&x_{47}&x_{51}&0&0\\
0&0&0&x_{46}&x_{52}&0&0\\
\hline
0&0&0&x_{45}&x_{53}&0&0\\
0&0&0&0&x_{54}&x_{64}&x_{74}\\
0&0&0&x_{43}&0&0&0\\
\hline
0&0&0&x_{42}&0&0&0\\
0&0&0&x_{41}&0&0&0\\ 
\end{array} \right]\]
\end{small} 

When $n=8$ and the characteristic of $K$ is not 2, we have

\begin{small}\[\overline{X}=\left[
\begin{array}{cc|cccc|cc}
0&0&0&0&0&0&0&0\\
0&0&0&0&0&0&0&0\\
\hline
0&0&0&0&0&0&0&0\\
x_{41}&x_{42}&x_{43}&0&0&x_{46}&x_{47}&x_{48}\\
x_{51}&x_{52}&x_{53}&x_{54}&0&x_{56}&0&0\\
0&0&0&x_{64}&x_{65}&0&0&0\\
\hline
0&0&0&x_{74}&0&0&0&0\\ 
0&0&0&x_{84}&0&0&0&0\\ 
\end{array} \right]\]
and 
\[\overline{Y}=\left[
\begin{array}{cc|cccc|cc}
0&0&0&x_{48}&x_{51}&0&0&0\\
0&0&0&x_{47}&x_{52}&0&0&0\\
\hline
0&0&0&x_{46}&x_{53}&0&0&0\\
0&0&0&0&x_{54}&x_{65}&x_{74}&x_{84}\\
0&0&0&0&0&x_{64}&0&0\\
0&0&0&x_{43}&x_{56}&0&0&0\\
\hline
0&0&0&x_{42}&0&0&0&0\\
0&0&0&x_{41}&0&0&0&0\\ 
\end{array} \right]\]
\end{small} 

When $n=8$ and the characteristic of $K$ is 2, we have

\begin{small}\[\overline{X}=\left[
\begin{array}{cc|cccc|cc}
0&0&0&0&0&0&0&0\\
0&0&0&0&0&0&0&0\\
\hline
0&0&0&0&0&0&0&0\\
x_{41}&x_{42}&0&0&0&x_{46}&x_{47}&x_{48}\\
x_{51}&x_{52}&x_{53}&x_{54}&0&x_{56}&0&0\\
0&0&x_{63}&x_{64}&x_{65}&0&0&0\\
\hline
0&0&0&x_{74}&0&0&0&0\\ 
0&0&0&x_{84}&0&0&0&0\\ 
\end{array} \right]\]
and 
\[\overline{Y}=\left[
\begin{array}{cc|cccc|cc}
0&0&0&x_{48}&x_{51}&0&0&0\\
0&0&0&x_{47}&x_{52}&0&0&0\\
\hline
0&0& 0&0&x_{56}&x_{63}&0&0\\
0&0& 0&0&x_{54}&x_{65}&x_{74}&x_{84}\\
0&0& 0&0&0&x_{64}&0&0\\
0&0& 0&x_{46}&x_{53}&0&0&0\\
\hline
0&0&0&x_{42}&0&0&0&0\\
0&0&0&x_{41}&0&0&0&0\\ 
\end{array} \right]\]
\end{small} 

\bibliographystyle{plain}
\bibliography{bibliography}

\end{document}